\newcommand{\Cay}{\mathrm{Cay}}
\newcommand{\GL}{\mathrm{GL}}
\newtheorem{theorem}{Theorem}[section]
\newtheorem{lemma}[theorem]{Lemma}
\newtheorem{example}{Example}
\theoremstyle{definition}
\numberwithin{equation}{section} 
\def\qed{\hfill$\Box$\vspace{12pt}}
\long\def\delete#1{}
\begin{document}
\title{Pretty good fractional revival on Cayley graphs over dicyclic groups}
\author{Jing Wang$^{a,b,c}$,~Ligong Wang$^{a,b,c}$$^,$\thanks{Supported by the National Natural Science Foundation of China (Nos. 11871398 and 12271439).}~,~Xiaogang Liu$^{a,b,c}$\thanks{Supported by the National Natural Science Foundation of China (No. 12371358) and the Guangdong Basic and Applied Basic Research Foundation (No. 2023A1515010986).}~$^,$\thanks{Corresponding author. Email addresses: wj66@mail.nwpu.edu.cn, lgwangmath@163.com, xiaogliu@nwpu.edu.cn}
\\[2mm]
{\small $^a$School of Mathematics and Statistics,}\\[-0.8ex]
{\small Northwestern Polytechnical University, Xi'an, Shaanxi 710072, P.R.~China}\\
{\small $^b$Research \& Development Institute of Northwestern Polytechnical University in Shenzhen,}\\[-0.8ex]
{\small Shenzhen, Guandong 518063, P.R. China}\\
{\small $^c$Xi'an-Budapest Joint Research Center for Combinatorics,}\\[-0.8ex]
{\small Northwestern Polytechnical University, Xi'an, Shaanxi 710129, P.R. China}\\
}
\date{}

\date{}

\openup 0.5\jot
\maketitle

\begin{abstract}
In this paper, we investigate the existence of pretty good fractional revival on Cayley graphs over dicyclic groups. We first give a necessary and sufficient description for Cayley graphs over dicyclic groups admitting pretty good fractional revival. By this description, we give some sufficient conditions for Cayley graphs over dicyclic groups admitting or not admitting pretty good fractional revival.
\smallskip

\emph{Keywords:} Pretty good fractional revival; Cayley graph; Dicyclic group

\emph{Mathematics Subject Classification (2010):} 05C50, 81P68
\end{abstract}

\section{Introduction}
Let $\Gamma$ be a simple and undirected graph with vertex set $V(\Gamma)$ and edge set $E(\Gamma)$. The \emph{adjacency matrix} of $\Gamma$ is denoted by $A=A(\Gamma)=(a_{u,v})_{u,v\in V(\Gamma)}$, where $a_{u,v}=1$ if $u$ and $v$ are adjacent, and $a_{u,v}=0$ otherwise. The \emph{transition matrix} \cite{FarhiG98} of $\Gamma$ with respect to $A$ is defined by
$$
H(t) = \exp(\imath tA)=\sum_{k=0}^{\infty}\frac{\imath^{k} A^{k} t^{k}}{k!}, ~ t \in \mathbb{R},~\imath=\sqrt{-1},
$$
where $\mathbb{R}$ is the set of real numbers.
Let $\mathbb{C}^{|\Gamma|}$ denote the $|\Gamma|$-dimensional vector space over complex field and $\mathbf{e}_u$ the standard basis vector in $\mathbb{C}^{|\Gamma|}$ indexed by the vertex $u$.
If $u$ and $v$ are distinct vertices in $\Gamma$ and there is a time $t$ such that
\begin{equation*}\label{FRCAY-EQUATION3}
H(t)\mathbf{e}_u=\alpha\mathbf{e}_u+\beta\mathbf{e}_v,
\end{equation*}
where $\alpha$ and $\beta$ are complex numbers and $|\alpha|^2+|\beta|^2=1$, then we say that $\Gamma$ admits \emph{fractional revival} (FR for short) from $u$ to $v$ at time $t$. In particular, if $\alpha=0$, we say $\Gamma$ admits \emph{perfect state transfer} (PST for short) from $u$ to $v$ at time $t$; and if $\beta=0$, we say $G$ is \emph{periodic}  relative to the vertex $u$ at time $t$.

Quantum state transfer was first introduced by Bose in \cite{Bose03}. It is a very important research content for quantum communication protocols. Up until now, many wonderful results on FR (or PST) have been achieved, including trees \cite{CoutinhoL2015, GodsilKSS12,GenestVZ16,CCTVZ19}, Cayley graphs \cite{Basic09, CaoCL20, CaoF21, CC, LiLZZ21,  Tan19, Tm19,WangWliu22,WangWliu23}, distance regular graphs \cite{Coutinho15,ChanCCTVZ20} and some graph operations \cite{chris1, chris2, LiuW2021}. For more information, we refer the reader to \cite{Coh14,Coh19, CGodsil, Godsil12,GodsilZ22}.

It is known that graphs with a given maximum valency with FR (or PST) are rare \cite{Godsil12,CCTVZ19}. Thus, in \cite{CGodsil}, Godsil proposed to consider a relaxation of PST, named as pretty good state transfer. A graph $G$ is said to admit \emph{pretty good state transfer} (PGST for short) from vertex $u$ to vertex $v$ if there is a sequence $\{t_k\}$ of real numbers such that
$$\lim\limits_{k\rightarrow\infty}H(t_k)\mathbf{e}_u=\gamma\mathbf{e}_v.$$
where $\gamma$ is a complex number and $|\gamma|=1$.
PGST has been studied on many families of graphs, such as trees \cite{Ban}, double stars \cite{Fan}, circulant graphs \cite{HPal3, HPal5}, Cayley graphs over dihedral groups \cite{CaoWF20} and Cayley graphs over semi-dihedral groups \cite{WangC19}.

Recently, Chan et al. \cite{ChanDEKL21} relaxed the condition for FR to give the definition of pretty good fractional revival. If there is a sequence $\{t_k\}$ of real numbers such that
$$\lim\limits_{k\rightarrow\infty}H(t_k)\mathbf{e}_u=\alpha\mathbf{e}_u+\beta\mathbf{e}_v,$$
where $\alpha$ and $\beta$ are complex numbers and $|\alpha|^2+|\beta|^2=1$, then we say that $\Gamma$ admits \emph{pretty good fractional revival} (PGFR for short) from $u$ to $v$ with respect to the time sequence $\{t_k\}$. In particular, if $\beta=0$, we say $G$ is \emph{approximate periodic} at vertex $u$. Note that approximate periodic occurs for every vertex synchronously. So it is not interesting to consider the case when two vertices are approximately periodic.

Up until now, there are only few results on PGFR. In 2021, Chan et al. \cite{ChanDEKL21} gave complete characterizations of when paths and cycles admit PGFR. In 2022, Chan et al.  \cite{ChanJLSYZ22} replaced $A(\Gamma)$ in the definition of PGFR by Laplacian matrix $L(\Gamma)=D(\Gamma)-A(\Gamma)$, where $D(\Gamma)$ denotes  the diagonal matrix of vertex degrees of $\Gamma$, and they classified the paths and the double stars that have Laplacian PGFR.

In this paper, we consider the existence of PGFR on Cayley graphs over  non-abelian groups. More specifically, we consider PGFR on Cayley graphs over dicyclic groups. Let $G$ be a group and $S$ a subset of $G$ with $1_G\notin S=S^{-1}=\left\{s^{-1}\mid s\in S\right\}$ (inverse-closed). The \emph{Cayley graph} $\Gamma=\Cay(G,S)$ is a graph whose vertex set is $G$ and edge set is $\{\{g,sg\}\mid g\in G, s\in S\}$.
The \emph{dicyclic group} $T_{4n}$, where $n\geq2$, is given by
\begin{align*}
T_{4n}&=\langle a,b\mid a^{2n}=1, a^n=b^2, b^{-1}ab=a^{-1}\rangle\\
      &=\{a^k, a^kb\mid 0\leq k\leq 2n-1\}.
\end{align*}

The paper is organized as follows. In Section 2, we introduce some known results on the representation of dicyclic groups and spectral decomposition of Cayley graphs over finite groups.  In Section 3, we first give a necessary and sufficient description for Cayley graphs over dicyclic groups admitting PGFR. By this description, we give some sufficient conditions for Cayley graphs over dicyclic groups admitting or not admitting PGFR.

\section{Preliminaries}
In this section, we give some notions, notations and helpful results used in this paper.
\subsection{The representation of dicyclic groups}
Let $G$ be a finite group and $V$ a non-zero vector space over $\mathbb{C}$ with finite dimension. A \emph{representation} of $G$ on $V$ is a group homomorphism $\rho:G\rightarrow \GL(V)$, where $\GL(V)$ is the general linear group of $V$ defined as the group of invertible linear transformations of $V$. The \emph{degree} of $\rho$ is defined as the dimension of $V$. Two representations $\rho_i:G\rightarrow \GL(V_i)$, $i=1,2$ are said to be \emph{equivalent}, written $\rho_1\sim\rho_2$, if there exists a vector space isomorphism $T:V_1\rightarrow V_2$ such that, for all $g\in G$, $\rho_1(g)T=T\rho_2(g)$. A representation $\rho$ of $G$ is \emph{reducible} if there exists two representations $\rho_1, \rho_2$ of $G$ such that $\rho\sim \rho_1\oplus\rho_2$, and \emph{irreducible} otherwise.
As usual, by identifying each element of $\GL(V)$ with its matrix with respect to a chosen basis for $V$, we may identify $\GL(V)$ with the group $\GL(n,\mathbb{C})$ of all invertible $n\times n$ matrices over complex field $\mathbb{C}$ with operation the product of matrices, and we may view a representation of $G$ on $V$ as a group homomorphism $\rho:G\rightarrow \GL(n,\mathbb{C})$. The \emph{character} of $\rho$ is the mapping $\chi_{\rho}:G\rightarrow \mathbb{C}$ defined by
$$\chi_{\rho}(g)=\mathrm{Tr}(\rho(g)), g\in G,$$
where $\mathrm{Tr}$ denotes the trace of a matrix.

Let $G$ be a finite non-abelian group and
$\mathbb{C}^{G}=\{f\mid f:G\rightarrow\mathbb{C}\}$ be the group algebra of $G$.  Suppose that \{$\rho^{(1)}, \rho^{(2)},\ldots,\rho^{(s)}$\} is a complete set of unitary representatives of the equivalence classes of irreducible representations of $G$ and $d_h$ denotes the \emph{degree} of $\rho^{(h)}$. Define $T:\mathbb{C}^{G}\rightarrow\GL(d_1,\mathbb{C})\times\cdots\times \GL(d_s,\mathbb{C})$
 by $Tf=\left(\hat{f}(\rho^{(1)}),\ldots,\hat{f}(\rho^{(s)})\right)$ where
$$\hat{f}(\rho^{(h)})_{ij}=\sum\limits_{g\in G}f(g)\overline{\rho_{ij}^{(h)}(g)},$$
and $\overline{\rho_{ij}^{(h)}(g)}$ denotes the complex conjugate of $\rho_{ij}^{(h)}(g)$. $Tf$ is called the \emph{Fourier transform} of $f$. The \emph{Fourier inversion} \cite{Steinberg12} is
$$f=\frac{1}{|G|}\sum\limits_{i,j,h}d_h\hat{f}(\rho^{(h)})_{ij}\rho^{(h)}_{ij}.$$

The representations and characters of the dicyclic group $T_{4n}$ are given as follows \cite{JamesL01}.

\begin{lemma}\emph{(see \cite{JamesL01}, Exercises 17.6 and 18.3)}\label{pgfrdicy-lemma0}
Let $n\geq2$ and $\omega=\exp(\frac{\pi \imath}{n})$ be a $2n$-th root of unity. The irreducible representations and the character table of the dicyclic group $T_{4n}$ are listed in Tables $1,2,3$ and $4$.
\end{lemma}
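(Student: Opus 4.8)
The plan is to prove the lemma constructively: exhibit a complete list of pairwise inequivalent irreducible representations of $T_{4n}$, verify the defining relations and irreducibility directly, confirm completeness by the dimension identity $\sum_h d_h^2=|G|$, and then read off the character table by taking traces. Since $T_{4n}$ has the cyclic normal subgroup $\langle a\rangle$ of order $2n$ and index two, I expect exactly four one-dimensional representations together with $n-1$ two-dimensional irreducible representations; the four tables in the statement should then correspond to the two parities of $n$ (which affect only the one-dimensional part) alongside the two-dimensional representations and the assembled character values.

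First I would determine the one-dimensional representations. Any homomorphism $\chi\colon T_{4n}\to\mathbb{C}^{\ast}$ must respect $b^{-1}ab=a^{-1}$, which forces $\chi(a)=\chi(a)^{-1}$, so $\chi(a)\in\{1,-1\}$; the relation $a^{n}=b^{2}$ then imposes $\chi(b)^{2}=\chi(a)^{n}$. Solving this constraint splits according to the parity of $n$: when $n$ is even one gets $\chi(a),\chi(b)\in\{1,-1\}$ independently, whereas when $n$ is odd the value $\chi(b)$ is a square root of $\chi(a)$, so that $\chi(b)\in\{\imath,-\imath\}$ precisely when $\chi(a)=-1$. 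Either way this yields exactly four one-dimensional representations, which accounts for the parity-dependent tables.

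Next I would construct the two-dimensional representations. For each $\ell\in\{1,\dots,n-1\}$ set $\rho^{(\ell)}(a)=\diag(\omega^{\ell},\omega^{-\ell})$ and $\rho^{(\ell)}(b)=\bigl(\begin{smallmatrix}0&1\\(-1)^{\ell}&0\end{smallmatrix}\bigr)$, and verify the three defining relations using $\omega^{2n}=1$ and $\omega^{n}=-1$: the identities $\rho^{(\ell)}(a)^{n}=(-1)^{\ell}I=\rho^{(\ell)}(b)^{2}$ and $\rho^{(\ell)}(b)^{-1}\rho^{(\ell)}(a)\rho^{(\ell)}(b)=\rho^{(\ell)}(a)^{-1}$ are then immediate (and the exhibited matrices are unitary, matching the unitary normalization used throughout). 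Irreducibility follows because for $1\le\ell\le n-1$ the matrix $\rho^{(\ell)}(a)$ has the two distinct eigenvalues $\omega^{\pm\ell}$, so its only invariant lines are the coordinate axes, which $\rho^{(\ell)}(b)$ interchanges; hence there is no common invariant line. Distinct values of $\ell$ in this range give distinct characters at $a$, namely $\chi_{\rho^{(\ell)}}(a)=2\cos(\ell\pi/n)$, which are pairwise distinct since $\cos$ is injective on $(0,\pi)$; thus these representations are pairwise inequivalent. The excluded indices $\ell=0$ and $\ell=n$ yield reducible representations that split into one-dimensional pieces, and indices outside $\{1,\dots,n-1\}$ only repeat the above up to equivalence, so this range is exactly the right one.

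Finally, completeness is confirmed by $4\cdot 1^{2}+(n-1)\cdot 2^{2}=4n=|T_{4n}|$, so by the standard theorem that the squared degrees of a complete set of irreducibles sum to the group order, the list is complete. The character table is then read off by taking traces: $\chi_{\rho^{(\ell)}}(a^{k})=\omega^{k\ell}+\omega^{-k\ell}=2\cos\!\bigl(\tfrac{k\ell\pi}{n}\bigr)$, while each element $a^{k}b$ is sent by $\rho^{(\ell)}$ to an anti-diagonal matrix and hence has character value $0$; combined with the one-dimensional characters this fills Tables 1--4. The only genuine subtlety, and the step I expect to demand the most care, is the parity-of-$n$ case analysis for the one-dimensional representations together with the bookkeeping that ensures the two-dimensional family is enumerated without repetition or omission; everything else reduces to direct matrix verification and the dimension count.
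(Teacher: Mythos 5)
The paper itself offers no proof of this lemma: it is imported verbatim from James and Liebeck (Exercises 17.6 and 18.3 of \cite{JamesL01}), so there is no internal argument to compare yours against. Your proposal is a correct, self-contained proof, and it is essentially the standard derivation that those exercises intend. The structure is sound at every step: the relation $b^{-1}ab=a^{-1}$ forces $\chi(a)=\pm 1$ for any one-dimensional representation, and $a^n=b^2$ forces $\chi(b)^2=\chi(a)^n$, which is exactly what produces the parity split between Tables 1/3 ($n$ even, $\chi(b)=\pm1$) and Tables 2/4 ($n$ odd, $\chi(b)=\pm\imath$ when $\chi(a)=-1$); the matrices you assign to $a$ and $b$ agree with the paper's tables since $\omega^{hn}=(-1)^h$; the relation checks, the irreducibility argument (distinct eigenvalues $\omega^{\pm\ell}$ of $\rho^{(\ell)}(a)$, so the only invariant lines are the coordinate axes, which $\rho^{(\ell)}(b)$ swaps), the inequivalence via the distinct character values $2\cos(\ell\pi/n)$ on $(0,\pi)$, and the completeness count $4\cdot 1^2+(n-1)\cdot 2^2=4n$ are all correct. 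Two small remarks. First, your aside that indices $\ell$ outside $\{1,\ldots,n-1\}$ ``only repeat the above up to equivalence'' is redundant once the dimension count is in place, and as stated it would need its own (easy) verification, e.g.\ that $\rho^{(2n-\ell)}$ and $\rho^{(\ell)}$ have equal characters; it is harmless but could be dropped. Second, your observation that the exhibited matrices are unitary is worth keeping explicit: Lemma \ref{pgfrdicy-lemma1} requires a complete set of \emph{unitary} representatives of the equivalence classes, so unitarity is precisely what makes these tables usable in the spectral decomposition that drives the rest of the paper.
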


\begin{table}[t]
\begin{center}
\caption{Irreducible representation of $T_{4 n}$ for $n$ even.}
\begin{tabular}{lll}
\hline                    ~~~~& $a$    &~~~~ $b$ \\
\hline$\psi_1$            ~~~~& $(1)$  &~~~~ $(1)$ \\
      $\psi_2$            ~~~~& $(1)$  &~~~~ $(-1)$ \\
      $\psi_3$            ~~~~& $(-1)$ &~~~~ $(1)$ \\
      $\psi_4$            ~~~~& $(-1)$ &~~~~ $(-1)$ \\
      $\rho_h,(1 \leq h \leq n-1)$ ~~& $\left(\begin{array}{cc}
                                    \omega^h & 0 \\
                                     0       & \omega^{-h}
                                     \end{array}\right)$
                                   ~~&~~ $\left(\begin{array}{cc}
                                        0         & 1 \\
                                     \omega^{h n} & 0\end{array}\right)$ \\
\hline
\end{tabular}
\end{center}
\end{table}

\begin{table}[t]
\begin{center}
\caption{Irreducible representation of $T_{4 n}$ for $n$ odd.}
\begin{tabular}{lll}
\hline                    ~~~~& $a$    &~~~~ $b$ \\
\hline$\psi_1$            ~~~~& $(1)$  &~~~~ $(1)$ \\
      $\psi_2$            ~~~~& $(1)$  &~~~~ $(-1)$ \\
      $\psi_3$            ~~~~& $(-1)$ &~~~~ $(\imath)$ \\
      $\psi_4$            ~~~~& $(-1)$ &~~~~ $(-\imath)$ \\
      $\rho_h,(1 \leq h \leq n-1)$ ~~& $\left(\begin{array}{cc}
                                    \omega^h & 0 \\
                                     0       & \omega^{-h}
                                     \end{array}\right)$
                                   ~~&~~ $\left(\begin{array}{cc}
                                        0         & 1 \\
                                     \omega^{h n} & 0\end{array}\right)$ \\
\hline
\end{tabular}
\end{center}
\end{table}

\begin{table}[t]
\begin{center}
\caption{Character Table of $T_{4 n}$ for $n$ even.}
\begin{tabular}{llllll}
\hline                    ~~~~& $1$    &~~~~ $a^n$     &~~~~ $a^k(1\leq k\leq n-1)$      &~~~~ $b$  &~~~~ $ab$ \\
\hline$\chi_1$            ~~~~& $1$    &~~~~ $1$       &~~~~ $1$                         &~~~~ $1$  &~~~~ $1$  \\
      $\chi_2$            ~~~~& $1$    &~~~~ $1$       &~~~~ $1$                         &~~~~ $-1$ &~~~~ $-1$ \\
      $\chi_3$            ~~~~& $1$    &~~~~ $1$       &~~~~ $(-1)^k$                    &~~~~ $1$  &~~~~ $-1$ \\
      $\chi_4$            ~~~~& $1$    &~~~~ $1$       &~~~~ $(-1)^k$                    &~~~~ $-1$ &~~~~ $1$  \\
      $\vartheta_h,(1 \leq h \leq n-1)$
                          ~~~~& $2$    &~~~~ $2(-1)^h$ &~~~~ $\omega^{hk}+\omega^{-hk}$  &~~~~ $0$  &~~~~ $0$  \\
\hline
\end{tabular}
\end{center}
\end{table}

\begin{table}[t]
\begin{center}
\caption{Character Table of $T_{4 n}$ for $n$ odd.}
\begin{tabular}{llllll}
\hline                    ~~~~& $1$    &~~~~ $a^n$      &~~~~ $a^k(1\leq k\leq n-1)$      &~~~~ $b$       &~~~~ $ab$ \\
\hline$\chi_1$            ~~~~& $1$    &~~~~ $1$        &~~~~ $1$                         &~~~~ $1$       &~~~~ $1$  \\
      $\chi_2$            ~~~~& $1$    &~~~~ $1$        &~~~~ $1$                         &~~~~ $-1$      &~~~~ $-1$ \\
      $\chi_3$            ~~~~& $1$    &~~~~ $-1$       &~~~~ $(-1)^k$                    &~~~~ $\imath$  &~~~~ $-\imath$ \\
      $\chi_4$            ~~~~& $1$    &~~~~ $-1$       &~~~~ $(-1)^k$                    &~~~~ $-\imath$ &~~~~ $\imath$  \\
      $\vartheta_h,(1 \leq h \leq n-1)$
                          ~~~~& $2$    &~~~~ $2(-1)^h$  &~~~~ $\omega^{hk}+\omega^{-hk}$  &~~~~ $0$        &~~~~ $0$  \\
\hline
\end{tabular}
\end{center}
\end{table}
\subsection{Spectral decomposition of Cayley graphs over finite groups}
Let $\Gamma$ be a graph with adjacency matrix $A$.  The eigenvalues of $A$ are called the \emph{eigenvalues} of $\Gamma$. Suppose that $ \lambda_1\geq\lambda_2\geq\cdots\geq\lambda_{|G|}$ ($\lambda_i$ and $\lambda_j$ may be equal) are all eigenvalues of $\Gamma$  and $\xi_j$ is the eigenvector associated with $\lambda_{j}$, $j=1,2,\ldots,|G|$.  Let $\mathbf{x}^H$ denote the conjugate transpose of a column vector $\mathbf{x}$. Then, for each eigenvalue $\lambda_j$ of $\Gamma$, define
$$
E_{\lambda_j} =\mathbf{\xi}_j (\mathbf{\xi}_j)^H,
$$
which is usually called the \emph{eigenprojector} corresponding to  $\lambda_j$ of $G$. Note that $\sum_{j=1}^{|G|}E_{\lambda_j}=I_{|G|}$ (the identity matrix of order $|G|$). Then
\begin{equation}\label{spect1}
A=A\sum_{j=1}^{|G|}E_{\lambda_j} =\sum_{j=1}^{|G|}A\mathbf{\xi}_j (\mathbf{\xi}_j)^H  =\sum_{j=1}^{|G|}\lambda_j\mathbf{\xi}_j (\mathbf{\xi}_j)^H  =\sum_{j=1}^{|G|}\lambda_jE_{\lambda_j},
\end{equation}
which is called the \emph{spectral decomposition of $A$ with respect to the eigenvalues}  (see ``Spectral Theorem for Diagonalizable Matrices'' in \cite[Page 517]{MAALA}). Note that $E_{\lambda_j}^{2}=E_{\lambda_j}$ and $E_{\lambda_j}E_{\lambda_h}=\mathbf{0}$ for $j\neq h$, where $\mathbf{0}$ denotes the zero matrix. So, by (\ref{spect1}), we have
\begin{equation}\label{SpecDec2-1}
H(t)=\sum_{k\geq 0}\dfrac{\imath^{k}A^{k}t^{k}}{k!}=\sum_{k\geq 0}\dfrac{\imath^{k}\left(\sum\limits_{j=1}^{|G|}\lambda_{j}^{k}E_{\lambda_j}\right)t^{k}}{k!} =\sum_{j=1}^{|G|}\exp(\imath t\lambda_{j})E_{\lambda_j}.
\end{equation}

The eigenvalues and eigenvectors of the adjacency matrix of a Cayley graph over a finite group are given in the following lemma.

\begin{lemma}\emph{(see \cite{Steinberg12}, Exercise 5.12.3)}\label{pgfrdicy-lemma1}
Let $G=\{g_1,g_2,\ldots,g_{|G|}\}$ be a finite group of order $|G|$ and let $\{\rho^{(1)}, \rho^{(2)},\ldots, \rho^{(s)}\}$ be a complete set of unitary representatives of the equivalent classes of irreducible representations of $G$. Let $\chi_h$ be the character of $\rho^{(h)}$ and $d_h$ be the degree of $\rho^{(h)}$. Suppose that $S$ is a symmetric set with $gSg^{-1}=S$ for all $g\in G$. Then the eigenvalues of the adjacency matrix of a Cayley graph $\Cay(G,S)$ are given by
$$\lambda_h=\frac{1}{d_h}\sum\limits_{g\in S}\chi_h(g),~ 1\leq h\leq s.$$
Each $\lambda_h$ has multiplicity $d_h^2$. Moreover, the vectors
$$\xi_{ij}^{(h)}=\frac{\sqrt{d_h}}{\sqrt{|G|}}\left(\rho_{ij}^{(h)}(g_1),\rho_{ij}^{(h)}(g_2),\ldots,\rho_{ij}^{(h)}(g_{|G|})\right)^T, ~1\leq i,j\leq d_h,$$
form an orthonormal basis orthogonal basis for the eigenspace $V_{\lambda_h}$.
\end{lemma}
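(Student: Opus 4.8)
The plan is to recognize the adjacency matrix $A$ of $\Cay(G,S)$ as the image of the central group-algebra element $\mathbf{s}=\sum_{s\in S}s$ under the regular representation, and then to exploit Schur's lemma together with the Schur orthogonality relations. First I would fix the convention that vertices $g_p,g_q$ are adjacent precisely when $g_pg_q^{-1}\in S$, so that $A_{pq}=1$ in that case and $0$ otherwise; because $S=S^{-1}$ this matrix is symmetric, and because $gSg^{-1}=S$ for all $g\in G$ the element $\mathbf{s}$ lies in the centre of the group algebra $\mathbb{C}^{G}$, which is what makes the whole block-diagonalisation work.

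The first key step is to pin down the scalars. Since $\mathbf{s}$ is central, Schur's lemma forces $\rho^{(h)}(\mathbf{s})=\sum_{s\in S}\rho^{(h)}(s)$ to be a scalar multiple $\lambda_h I_{d_h}$ of the identity on each irreducible module. Taking traces then gives $\lambda_h d_h=\Tr\big(\rho^{(h)}(\mathbf{s})\big)=\sum_{s\in S}\chi_h(s)$, which yields the claimed formula $\lambda_h=\frac{1}{d_h}\sum_{g\in S}\chi_h(g)$.

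The second step is the direct verification that each $\xi_{ij}^{(h)}$ is an eigenvector for $\lambda_h$. Computing the $g_p$-entry of $A\xi_{ij}^{(h)}$ and substituting $g_q=s^{-1}g_p$ (legitimate because $S=S^{-1}$), I would expand $\rho_{ij}^{(h)}(s^{-1}g_p)=\sum_k\rho_{ik}^{(h)}(s^{-1})\rho_{kj}^{(h)}(g_p)$ via the homomorphism property and then sum over $s\in S$. The inner sum $\sum_{s\in S}\rho_{ik}^{(h)}(s^{-1})$ is exactly the $(i,k)$-entry of $\rho^{(h)}(\mathbf{s})=\lambda_h I$, hence equals $\lambda_h\delta_{ik}$; collapsing the sum leaves $\lambda_h\rho_{ij}^{(h)}(g_p)$, establishing $A\xi_{ij}^{(h)}=\lambda_h\xi_{ij}^{(h)}$.

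Finally, orthonormality and the dimension count follow from the Schur orthogonality relations $\sum_{g\in G}\rho_{ij}^{(h)}(g)\overline{\rho_{i'j'}^{(h')}(g)}=\frac{|G|}{d_h}\delta_{hh'}\delta_{ii'}\delta_{jj'}$: inserting these (up to the harmless placement of the complex conjugate) into $\langle\xi_{ij}^{(h)},\xi_{i'j'}^{(h')}\rangle$ and using the normalising factor $\frac{\sqrt{d_h}}{\sqrt{|G|}}$ gives $\delta_{hh'}\delta_{ii'}\delta_{jj'}$. As $h$ ranges over the $s$ classes and $1\le i,j\le d_h$, there are $\sum_h d_h^2=|G|$ such vectors, so they form an orthonormal basis of $\mathbb{C}^{|G|}$; the $d_h^2$ vectors attached to a fixed $h$ then account for the multiplicity $d_h^2$ of $\lambda_h$. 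I expect the main obstacle to be purely bookkeeping---pinning down the adjacency convention and the index substitution $g_q=s^{-1}g_p$ so that the centrality of $\mathbf{s}$ is applied to the correct matrix entries---rather than anything conceptually deep, since all the representation-theoretic inputs (Schur's lemma and the orthogonality relations) are standard.
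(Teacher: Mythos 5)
Your proposal is correct: the centrality of $\sum_{s\in S}s$ under the hypothesis $gSg^{-1}=S$, Schur's lemma to identify $\lambda_h$, the direct matrix-coefficient computation for the eigenvectors, and the Schur orthogonality relations with the count $\sum_h d_h^2=|G|$ are all applied soundly. The paper itself gives no proof of this lemma---it is quoted from Steinberg's book (Exercise 5.12.3)---and your argument is precisely the standard one that citation points to, so there is nothing further to compare.
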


Let $G$ be a finite group, and $S$ a symmetric set with $gSg^{-1}=S$. Let $\Gamma=\Cay(G,S)$ be a Cayley group over $G$. By Lemma \ref{pgfrdicy-lemma1} and the definition of eigenprojector, we have
\begin{equation*}\label{pgfrdicy-equation1}
(E_{\lambda_h})_{u,v}=\frac{d_h}{|G|}\rho_{ij}^{(h)}(u)\overline{\rho_{ij}^{(h)}(v)}, ~1\leq i,j\leq d_h,
\end{equation*}
with respect to the eigenvalue $\lambda_h$ of $\Gamma$, $1\leq h\leq s$. Hence, by  (\ref{SpecDec2-1}), the spectral decomposition of $\Gamma=\Cay(G,S)$ is
\begin{equation}\label{pgfrdicy-equation*}
H(t)_{u,v}=\frac{1}{|G|}\sum\limits_{i,j,h}d_h\exp(\imath t \lambda_h)\rho_{ij}^{(h)}(u)\overline{\rho_{ij}^{(h)}(v)}.
\end{equation}

\section{PGFR on Cayley graphs over dicyclic groups}
Recall that if $\Gamma=\Cay(T_{4n}, S)$ admits PGFR from $u$ to $v$, then there is a sequence $\{t_k\}$ of real numbers such that
\begin{equation}\label{pgfrdicy-equation2}
\lim_{k\rightarrow\infty}H(t_k)\mathbf{e}_u=\alpha\mathbf{e}_u+\beta\mathbf{e}_v,
\end{equation}
where $\alpha$ and $\beta$ are complex numbers and $|\alpha|^2+|\beta|^2=1$. Write $\lim_{k\rightarrow\infty}H(t_k)=U$. Note that $U$ is a unitary matrix. Then by (\ref{pgfrdicy-equation2}), the $u$-th row of $U$ is determined by
\begin{equation*}
U_{u,w}=\left\{
\begin{array}{rl}
\alpha, &\text{if}~w=u,\\[0.2cm]
\beta,  &\text{if}~w=v,\\[0.2cm]
0,      &\text{otherwise}.
\end{array}\right.
\end{equation*}
On the other hand, by (\ref{pgfrdicy-equation*}),
\begin{equation*}
U_{u,w}=\lim_{k\rightarrow\infty}H(t_k)_{u,w}=\frac{1}{4n}\sum\limits_{i,j,h}d_h\lim_{k\rightarrow\infty}\exp(\imath t_k \lambda_h)\rho_{ij}^{(h)}(u)\overline{\rho_{ij}^{(h)}(w)}.
\end{equation*}
Write
$$\hat{f}(\rho^{(h)})_{ij}=\lim_{k\rightarrow\infty}\exp(\imath t_k \lambda_h)\rho_{ij}^{(h)}(u), ~ 1\leq i,j\leq d_h, ~1\leq h\leq s,$$
and $f=U_{u,w}$. Then by its Fourier transform, for $1\leq i,j\leq d_h$, $1\leq h\leq s$, we have
\begin{equation}\label{pgfrdicy-equation4}
\lim_{k\rightarrow\infty}\exp(\imath t_k \lambda_h)\rho_{ij}^{(h)}(u)=\sum\limits_{w\in T_{4n}}U_{u,w}\rho_{ij}^{(h)}(w)=\alpha\rho_{ij}^{(h)}(u)+\beta\rho_{ij}^{(h)}(v).
\end{equation}

For the sake of convenience, we label $0\leq u\leq2n-1$ as the point $a^u$, and $2n\leq u\leq4n-1$ as the point $a^ub$. Note that the representations of $\Cay(T_{4n}, S)$ are shown in Tables $1$ and $2$. We label $\lambda_i~(1\leq i\leq 4)$ as the eigenvalues corresponding to the representations $\psi_i ~(1\leq i\leq 4)$ and label $\mu_h ~(1\leq h\leq n-1)$ as the eigenvalues corresponding to the representations $\rho_h ~(1\leq h\leq n-1)$, respectively. Plugging them in (\ref{pgfrdicy-equation4}),
\begin{itemize}
\item[\rm(1)]for $0\leq u,v\leq2n-1$ or $2n\leq u,v\leq4n-1$, we have
\begin{equation}\label{pgfrdicy-equation5}
\left\{
\begin{aligned}
&\lim_{k\rightarrow\infty}\exp(\imath t_k\lambda_1)=\alpha+\beta,\\
&\lim_{k\rightarrow\infty}\exp(\imath t_k\lambda_2)=\alpha+\beta,\\
&\lim_{k\rightarrow\infty}\exp(\imath t_k\lambda_3)=\alpha+(-1)^{u+v}\beta,\\
&\lim_{k\rightarrow\infty}\exp(\imath t_k\lambda_4)=\alpha+(-1)^{u+v}\beta,\\
&\lim_{k\rightarrow\infty}\exp(\imath t_k\mu_h)=\alpha+\omega^{(v-u)h}\beta, ~1\leq h\leq n-1,\\
&\lim_{k\rightarrow\infty}\exp(\imath t_k\mu_h)=\alpha+\omega^{(u-v)h}\beta, ~1\leq h\leq n-1;\\
\end{aligned}
\right.
\end{equation}

\item[\rm(2)] for $0\leq u\leq2n-1, 2n\leq v\leq4n-1$, we have
\begin{equation*}
\left\{
\begin{aligned}
&\lim_{k\rightarrow\infty}\exp(\imath t_k\lambda_1)=\alpha+\beta,\\
&\lim_{k\rightarrow\infty}\exp(\imath t_k\lambda_2)=\alpha-\beta,\\
&\lim_{k\rightarrow\infty}\exp(\imath t_k\lambda_3)=\left\{
\begin{array}{ll}
  \alpha+(-1)^{u+v}\beta, &\text{if~} n \text{~is~even},\\[0.2cm]
\alpha+(-1)^{u+v}\beta\imath, &\text{if~} n \text{~is~odd},\\
\end{array}\right.\\
&\lim_{k\rightarrow\infty}\exp(\imath t_k\lambda_4)=\left\{
\begin{array}{ll}
\alpha+(-1)^{u+v+1}\beta, &\text{if~} n \text{~is~even},\\[0.2cm]
\alpha+(-1)^{u+v+1}\beta\imath, &\text{if~} n \text{~is~odd},\\
\end{array}\right.\\
&\lim_{k\rightarrow\infty}\exp(\imath t_k\mu_h)=\alpha, ~1\leq h\leq n-1,\\
&\omega^{vh}\beta=(-1)^h\omega^{-vh}\beta=0, ~1\leq h\leq n-1;\\
\end{aligned}
\right.
\end{equation*}

\item[\rm(3)] for $2n\leq u\leq4n-1, 0\leq v\leq2n-1$, we have
\begin{equation*}
\left\{
\begin{aligned}
&\lim_{k\rightarrow\infty}\exp(\imath t_k\lambda_1)=\alpha+\beta,\\
&\lim_{k\rightarrow\infty}\exp(\imath t_k\lambda_2)=\alpha-\beta,\\
&\lim_{k\rightarrow\infty}\exp(\imath t_k\lambda_3)=\left\{
\begin{array}{ll}
\alpha+(-1)^{u+v}\beta, &\text{if~} n \text{~is~even},\\[0.2cm]
\alpha-(-1)^{u+v}\beta\imath, &\text{if~} n \text{~is~odd},\\
\end{array}\right.\\
&\lim_{k\rightarrow\infty}\exp(\imath t_k\lambda_4)=\left\{
\begin{array}{ll}
\alpha+(-1)^{u+v+1}\beta, &\text{if~} n \text{~is~even},\\[0.2cm]
\alpha-(-1)^{u+v+1}\beta\imath, &\text{if~} n \text{~is~odd},\\
\end{array}\right.\\
&\lim_{k\rightarrow\infty}\exp(\imath t_k\mu_h)=\alpha,~1\leq h\leq n-1,\\
&\omega^{vh}\beta=\omega^{-vh}\beta=0,~1\leq h\leq n-1.\\
\end{aligned}
\right.
\end{equation*}
\end{itemize}

Since $\beta\neq0$, $\Cay(T_{4n}, S)$ does not admit PGFR between vertices $u$ and $v$ if $0\leq u\leq2n-1$ and $2n\leq v\leq4n-1$ or $2n\leq u\leq4n-1$ and $0\leq v\leq2n-1$. Thus we only need to consider the first case with $0\leq u,v\leq2n-1$ or $2n\leq u,v \leq4n-1$. By the last two equations of (\ref{pgfrdicy-equation5}), we have
$$\omega^{(u-v)h}=\omega^{(v-u)h}, ~\text{for}~1\leq h\leq n-1,$$
which implies $n\mid (u-v)$. That is, $u-v=n$. Thus, (\ref{pgfrdicy-equation5}) is equivalent to the following:
\begin{equation}\label{pgfrdicy-equation6}
\left\{
\begin{aligned}
&\lim_{k\rightarrow\infty}\exp(\imath t_k\lambda_1)=\alpha+\beta,\\
&\lim_{k\rightarrow\infty}\exp(\imath t_k\lambda_2)=\alpha+\beta,\\
&\lim_{k\rightarrow\infty}\exp(\imath t_k\lambda_3)=\alpha+(-1)^n\beta,\\
&\lim_{k\rightarrow\infty}\exp(\imath t_k\lambda_4)=\alpha+(-1)^n\beta,\\
&\lim_{k\rightarrow\infty}\exp(\imath t_k\mu_h)=\alpha+(-1)^h\beta, ~1\leq h\leq n-1.\\
\end{aligned}
\right.
\end{equation}

Suppose that $\delta_1, \delta_2$ are real numbers satisfying $\alpha+\beta=\exp(\imath \delta_1)$ and $\alpha-\beta=\exp(\imath \delta_2)$. We consider the following two cases.

\noindent\emph{Case 1.} $n$ is odd.
%
 (\ref{pgfrdicy-equation6}) is equivalent to the system
\begin{equation}\label{pgfrdicy-equation24}
\begin{aligned}
\mid t\lambda_i-\delta_1\mid<\varepsilon \pmod {2\pi},
 ~~&\text{for} ~i=1,2,\\
\mid t\lambda_i-\delta_2\mid<\varepsilon \pmod {2\pi},
~~&\text{for} ~i=3,4,\\
~\mid t\mu_h-\delta_1\mid<\varepsilon \pmod {2\pi},
 ~~&\text{for} ~h \text{~is even},\\
 ~\mid t\mu_h-\delta_2\mid<\varepsilon \pmod {2\pi},
 ~~&\text{for} ~h \text{~is odd}
\end{aligned}
\end{equation}
has a solution $t_\varepsilon$ for all $\varepsilon>0$.
Using the proof of Theorem 2.4 in \cite{ChanDEKL21}, this condition is equivalent to, for any integers $l'_i$ and $l_h$,
$$\sum\limits_{i=2}^4l'_i(\lambda_i-\lambda_1)+\sum\limits_{h=1}^{n-1}l_h(\mu_h-\lambda_1)=0,$$
implies
\begin{equation}\label{pgfrdicy-equation13}
l'_3+l'_4+\sum\limits_{h~\text{odd}}l_h\neq\pm1.
\end{equation}

\noindent\emph{Case 2.} $n$ is even.
 (\ref{pgfrdicy-equation6}) is equivalent to the system
\begin{equation}\label{pgfrdicy-equation15}
\begin{aligned}
\mid t\lambda_i-\delta_1\mid<\varepsilon \pmod {2\pi},
 ~~&\text{for}~1\leq i\leq 4,\\
 ~\mid t\mu_h-\delta_1\mid<\varepsilon \pmod {2\pi},
 ~~&\text{for}~h \text{~is even},\\
~\mid t\mu_h-\delta_2\mid<\varepsilon \pmod {2\pi},
~~&\text{for}~h \text{~is odd}
\end{aligned}
\end{equation}
has a solution $t_\varepsilon$ for all $\varepsilon>0$.
Using the proof of Theorem 2.4 in \cite{ChanDEKL21}, this condition is equivalent to, for any integers $l'_i$ and $l_h$,
$$\sum\limits_{i=2}^4l'_i(\lambda_i-\lambda_1)+\sum\limits_{h=1}^{n-1}l_h(\mu_h-\lambda_1)=0,$$
implies
\begin{equation}\label{pgfrdicy-equation23}
\sum\limits_{h~\text{odd}}l_h\neq\pm1.
\end{equation}

%

\begin{theorem}\label{pgfrdicy-maintheorem1}
Let $n=p^k$, where $p$ is an odd prime number and $k\geq1$.  Let $S\subseteq T_{4n}$ such that
$S_1=\{a^{\pm k_1},a^{\pm k_2},\ldots,a^{\pm k_r}\},~1\leq k_1<\cdots<k_r\leq n-1,$
$S_2=\langle a \rangle b$ and $S=S_1\cup S_2$. Let $r_{\text{odd}}$ be the number of odd integers in $\{k_i | 1\leq i\leq r\}$. If $(r_{\text{odd}},p)=1$, then $\Cay(T_{4n}, S)$ admits PGFR.
\end{theorem}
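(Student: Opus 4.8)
The plan is to verify condition \eqref{pgfrdicy-equation13} directly, since $n=p^k$ with $p$ odd forces $n$ to be odd and hence puts us in Case~1. First I would compute the eigenvalues of $\Cay(T_{4n},S)$ from the character table for $n$ odd (Table~4) together with Lemma~\ref{pgfrdicy-lemma1}. Using $\lambda_i=\sum_{g\in S}\chi_i(g)$ for the four degree-one characters and $\mu_h=\tfrac12\sum_{g\in S}\vartheta_h(g)$, and noting that every element of $S_2=\langle a\rangle b$ contributes $0$ to $\mu_h$ while the $n$ even-indexed and $n$ odd-indexed elements of $S_2$ split evenly between the classes $b$ and $ab$, one obtains
$$\lambda_1=2(r+n),\quad \lambda_2=2(r-n),\quad \lambda_3=\lambda_4=2(r-2r_{\text{odd}}),\quad \mu_h=\sum_{i=1}^{r}\bigl(\omega^{hk_i}+\omega^{-hk_i}\bigr),$$
where I have used $\sum_i(-1)^{k_i}=r-2r_{\text{odd}}$. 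With these in hand it remains to show that every integer relation $\sum_{i=2}^{4}l'_i(\lambda_i-\lambda_1)+\sum_{h=1}^{n-1}l_h(\mu_h-\lambda_1)=0$ forces $N:=l'_3+l'_4+\sum_{h\text{ odd}}l_h\neq\pm1$.

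The key step is a reduction modulo a prime above $p$. Since $2n=2p^k$ and $\Phi_{2p^k}(x)=\Phi_{p^k}(-x)\equiv(-1)^{\phi(p^k)}(x+1)^{\phi(p^k)}\pmod p$, the factor $x+1$ divides $\Phi_{2p^k}$ modulo $p$, so there is a ring homomorphism $\pi\colon\mathbb{Z}[\omega]\to\mathbb{F}_p$ with $\pi(\omega)=-1$ and $\pi|_{\mathbb{Z}}$ equal to reduction modulo $p$. Applying $\pi$ to the eigenvalues (using $n\equiv0\pmod p$, together with $(-1)^{hk_i}=1$ for $h$ even and $(-1)^{hk_i}=(-1)^{k_i}$ for $h$ odd) gives $\pi(\lambda_1)=\pi(\lambda_2)=2r$ and $\pi(\mu_h)=2r$ for $h$ even, whereas $\pi(\lambda_3)=\pi(\lambda_4)=2(r-2r_{\text{odd}})$ and $\pi(\mu_h)=2(r-2r_{\text{odd}})$ for $h$ odd. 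Hence each of $\lambda_2-\lambda_1$ and $\mu_h-\lambda_1$ ($h$ even) maps to $0$, while each of $\lambda_3-\lambda_1$, $\lambda_4-\lambda_1$ and $\mu_h-\lambda_1$ ($h$ odd) maps to $-4r_{\text{odd}}$. Applying $\pi$ to the relation therefore yields $-4\,r_{\text{odd}}\,N\equiv0\pmod p$. Because $p$ is odd and $(r_{\text{odd}},p)=1$, both $4$ and $r_{\text{odd}}$ are invertible modulo $p$, so $N\equiv0\pmod p$; as $p\ge3$ this gives $N\neq\pm1$, which is exactly \eqref{pgfrdicy-equation13}, and PGFR follows.

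I expect the main obstacle to lie in justification rather than strategy: confirming that $\pi(\omega)=-1$ genuinely defines a homomorphism (equivalently, the mod-$p$ factorization of $\Phi_{2p^k}$, or that $\omega\equiv-1$ modulo the totally ramified prime over $p$), and carefully checking the eigenvalue computation — in particular the coincidence $\lambda_3=\lambda_4=\mu_h$ (for $h$ odd) under $\pi$, which is what collapses the families $\{\lambda_3,\lambda_4\}$ and $\{\mu_h:h\text{ odd}\}$ to a single residue and allows them to be bundled into $N$. Once these points are settled the congruence $-4\,r_{\text{odd}}N\equiv0\pmod p$ does all the work, and the hypothesis $(r_{\text{odd}},p)=1$ enters precisely to force $p\mid N$.
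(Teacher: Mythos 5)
Your proposal is correct and is essentially the paper's proof in different clothing: the paper also reduces the integer relation to the congruence $p \mid -4r_{\text{odd}}\bigl(l'_3+l'_4+\sum_{h~\text{odd}}l_h\bigr)$ and invokes $(r_{\text{odd}},p)=1$ to conclude \eqref{pgfrdicy-equation13}. Where you use the ring homomorphism $\pi\colon\mathbb{Z}[\omega]\to\mathbb{F}_p$ with $\pi(\omega)=-1$, the paper does the same thing concretely by writing the relation as $L_1(\omega)=0$, factoring $L_1(x)=\Phi_{2p^k}(x)g_1(x)$, and evaluating at $x=-1$ with $\Phi_{2p^k}(-1)=p$ --- the two devices are identical in content.
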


\begin{proof}
By a simple calculation, we can easily get $gSg^{-1}=S$ for all $g\in T_{4n}$. Note that the characters of $T_{4n}$ are given in Table 4. By Lemmas \ref{pgfrdicy-lemma0} and \ref{pgfrdicy-lemma1}, the Cayley graph $\Cay(T_{4n}, S)$ has the following eigenvalues:
$$\lambda_1=2r+2p^k, ~\lambda_2=2r-2p^k, ~ \lambda_3=\lambda_4=2r-4r_{\text{odd}}, ~ ~\text{and}~\mu_h=\sum\limits_{i=1}^r\omega^{hk_i}+\omega^{-hk_i}, ~1\leq h\leq p^k-1.$$
Suppose that $l'_i, l_h$ ($2\leq i \leq 4, 1\leq h\leq n-1$) are integers satisfying
$$\sum\limits_{i=2}^4l'_i(\lambda_i-\lambda_1)+\sum\limits_{h=1}^{p^k-1}l_h(\mu_h-\lambda_1)=0,$$
that is,
$$-4p^kl'_2-(l'_3+l'_4)(4r_{\text{odd}}+2p^k)+\sum\limits_{i=1}^r\sum\limits_{h=1}^{p^k-1}l_h\left(\omega^{hk_i}+\omega^{-hk_i}\right)-\sum\limits_{h=1}^{p^k-1}l_h(2r+2p^k)=0.$$
Hence, $\omega$ is a root of the polynomial
\begin{equation}\label{pgfrdicy-equation10}
L_1(x)=-4p^kl'_2-(l'_3+l'_4)(4r_{\text{odd}}+2p^k)+\sum\limits_{i=1}^r\sum\limits_{h=1}^{p^k-1}l_h\left(x^{hk_i}+x^{(2p^k-h)k_i}\right)-\sum\limits_{h=1}^{p^k-1}l_h(2r+2p^k).
\end{equation}
Let $\Phi_{2p^k}(x)$ be the $2p^k$-th cyclotomic polynomial. Then there exists a polynomial $g_1(x)$ such that
\begin{equation}\label{pgfrdicy-equation11}
L_1(x)=\Phi_{2p^k}(x)g_1(x).
\end{equation}
Let $x=-1$. By (\ref{pgfrdicy-equation10}), we have
\begin{equation*}
\begin{aligned}
L_1(-1)&=-4p^kl'_2-(l'_3+l'_4)(4r_{\text{odd}}+2p^k) +2r_{\text{odd}}\sum\limits_{h=1}^{p^k-1}(-1)^hl_h \\ &~~~~+2(r-r_{\text{odd}})\sum\limits_{h=1}^{p^k-1}l_h -\sum\limits_{h=1}^{p^k-1}l_h(2r+2p^k)\\
&=-4p^kl'_2-(l'_3+l'_4)(4r_{\text{odd}}+2p^k)-2p^k\sum\limits_{h~\text{even}}l_h-(4r_{\text{odd}}+2p^k)\sum\limits_{h~\text{odd}}l_h.
\end{aligned}
\end{equation*}
Since $\Phi_{2p^k}(-1)=\Phi_{p^k}(1)=p$. Combining with (\ref{pgfrdicy-equation11}), we have
$$p\mid\left(-4p^kl'_2-(l'_3+l'_4)(4r_{\text{odd}}+2p^k)-2p^k\sum\limits_{h~\text{even}}l_h-(4r_{\text{odd}}+2p^k)\sum\limits_{h~\text{odd}}l_h\right)$$
Then
$$p\mid -4r_{\text{odd}}(l'_3+l'_4+\sum\limits_{h~\text{odd}}l_h),$$
Since $(r_{\text{odd}},p)=1$,  we have
$$l'_3+l'_4+\sum\limits_{h~\text{odd}}l_h\neq \pm1.$$
By (\ref{pgfrdicy-equation13}), the Cayley graph $\Cay(T_{4n}, S)$ admits PGFR between vertices $u$ and $v$~$(0\leq u,v\leq2n-1 ~\text{or}~2n\leq u,v\leq4n-1)$ with $u-v=n$.
\qed\end{proof}

\begin{example}
{\em
Let $n=3$. Suppose that $S=\{a, a^{-1}\}\cup \langle a \rangle b$. Then the eigenvalues of the Cayley graph $\Cay(T_{12}, S)$ are
$$\lambda_1=8,~\lambda_2=-4,~\lambda_3=\lambda_4=-2,~\mu_1=1,~\mu_2=-1.$$
Suppose that $l'_2,l'_3,l'_4, l_1,l_2$ are integers satisfying
$$\sum\limits_{i=2}^4l'_i(\lambda_i-\lambda_1)+\sum\limits_{h=1}^{2}l_h(\mu_h-\lambda_1)=0,$$
that is,
$$-12l'_2-10(l'_3+l'_4)-7l_1-9l_2=0.$$
By a simple calculation, we have
$$-10(l'_3+l'_4+l_1)=12l'_2+9l_2-3l_1.$$
Thus
$$3\mid -10(l'_3+l'_4+l_1),$$
which implies that
$$l'_3+l'_4+l_1\neq \pm1.$$
}
\end{example}
By (\ref{pgfrdicy-equation13}), we get that the Cayley graph $\Cay(T_{12}, S)$ admits PGFR.

We need the Kronecker Approximation Theorem in the following discussion.
\begin{lemma}\emph{(see \cite[Kronecker Approximation Theorem]{Hw})}\label{Kronecker-Approximation-Theorem}
Let $a_1,a_2,\ldots,a_m$ be arbitrary real numbers. Let $1,b_1,b_2,\ldots,b_m$ be linearly independent over $\mathbb{Q}$. Then for $\forall~ \varepsilon>0$, there exist $l\in\mathbb{Z}$ and $q_1,q_2,\ldots, q_m\in \mathbb{Z}$ such that
$$\mid lb_j-q_j-a_j\mid<\varepsilon,~1\leq j\leq s.$$
\end{lemma}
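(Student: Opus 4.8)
The plan is to reduce the simultaneous approximation statement to a density (equidistribution) statement on the $m$-dimensional torus $\mathbb{T}^m=\mathbb{R}^m/\mathbb{Z}^m$. Writing $\mathbf{b}=(b_1,\ldots,b_m)$, the requested conclusion—that integers $l,q_1,\ldots,q_m$ exist with $|lb_j-q_j-a_j|<\varepsilon$ for every $j$—says precisely that the point $(a_1,\ldots,a_m)\bmod\mathbb{Z}^m$ lies within $\varepsilon$ (in the sup-metric on the torus) of some point $l\mathbf{b}\bmod\mathbb{Z}^m$ of the cyclic orbit $\{\,l\mathbf{b}\bmod\mathbb{Z}^m : l\in\mathbb{Z}\,\}$. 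Hence it suffices to prove that this orbit is dense in $\mathbb{T}^m$, and then to read off the integers $q_j$ from the nearby orbit point.

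To establish density I would invoke Weyl's equidistribution criterion: the sequence $\bigl(l\mathbf{b}\bmod\mathbb{Z}^m\bigr)_{l\geq 1}$ is equidistributed in $\mathbb{T}^m$ if and only if, for every nonzero integer vector $\mathbf{k}=(k_1,\ldots,k_m)\in\mathbb{Z}^m$,
$$\lim_{N\to\infty}\frac{1}{N}\sum_{l=1}^{N}\exp\!\left(2\pi\imath\, l\,\langle\mathbf{k},\mathbf{b}\rangle\right)=0.$$
The inner sum is a geometric progression with ratio $\exp(2\pi\imath\langle\mathbf{k},\mathbf{b}\rangle)$, so its Cesàro average tends to $0$ exactly when $\langle\mathbf{k},\mathbf{b}\rangle=k_1b_1+\cdots+k_mb_m\notin\mathbb{Z}$. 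This is where the hypothesis enters: if $\langle\mathbf{k},\mathbf{b}\rangle=q$ were an integer for some nonzero $\mathbf{k}$, then $k_1b_1+\cdots+k_mb_m-q\cdot 1=0$ would be a nontrivial rational relation among $1,b_1,\ldots,b_m$, contradicting their $\mathbb{Q}$-linear independence. Thus the criterion holds for every nonzero $\mathbf{k}$, the orbit is equidistributed, hence dense, and the approximation follows for the prescribed target $(a_1,\ldots,a_m)$ and any $\varepsilon>0$.

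The main obstacle is really only the translation between ``$\mathbb{Q}$-linear independence of $1,b_1,\ldots,b_m$'' and ``nonvanishing of the exponential sums,'' together with the standard implication that equidistribution yields density; the remainder is bookkeeping. If one prefers to avoid analytic machinery, the same density can be obtained structurally: the closure $H$ of $\{\,l\mathbf{b}\bmod\mathbb{Z}^m\,\}$ is a closed subgroup of the compact abelian group $\mathbb{T}^m$, and were $H\neq\mathbb{T}^m$, Pontryagin duality would supply a nontrivial character of $\mathbb{T}^m$—that is, a nonzero $\mathbf{k}\in\mathbb{Z}^m$—trivial on $H$, forcing $\exp(2\pi\imath\langle\mathbf{k},\mathbf{b}\rangle)=1$ and hence $\langle\mathbf{k},\mathbf{b}\rangle\in\mathbb{Z}$, the same contradiction. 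Either route reduces the theorem to the single arithmetic input that $1,b_1,\ldots,b_m$ admit no nontrivial integer linear relation.
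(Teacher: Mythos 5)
Your proof is correct, but there is nothing in the paper to compare it against: the paper does not prove this lemma, it simply quotes it as the Kronecker Approximation Theorem with a citation to Hardy and Wright. Taking the cited source as the reference proof, your route is genuinely different. Your reduction of the simultaneous approximation to density of the orbit $\{\,l\mathbf{b} \bmod \mathbb{Z}^m : l \in \mathbb{Z}\,\}$ in the torus is the right translation; the geometric-series computation correctly shows the Weyl averages vanish exactly when $\langle\mathbf{k},\mathbf{b}\rangle \notin \mathbb{Z}$; and the hypothesis that $1,b_1,\ldots,b_m$ are linearly independent over $\mathbb{Q}$ is used precisely where it must be, to exclude a relation $k_1b_1+\cdots+k_mb_m - q = 0$ with $\mathbf{k}\neq 0$. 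The alternative closed-subgroup/Pontryagin-duality argument you sketch is also complete and standard. By contrast, the proofs in Hardy--Wright (Ch.\ XXIII, the arguments of Lettenmeyer and Estermann) are elementary inductions on the number $m$ of variables, using no Fourier analysis or duality; your approach costs the analytic input of Weyl's criterion but buys a stronger conclusion (equidistribution of the forward orbit, not merely density), which in particular yields the time sequence $\{t_k\}$ flavor of statement the paper actually needs. One cosmetic point: the bound ``$1\leq j\leq s$'' in the paper's statement is a typo for ``$1\leq j\leq m$,'' which your argument implicitly corrects.
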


The following lemma is an another version of the Kronecker Approximation Theorem.
\begin{lemma}\emph{(see \cite{Ban,VinetZ2012})}\label{Kronecker-Approximation-Theorem2}
Let $a_1,a_2,\ldots,a_m$  and $b_1,b_2, \ldots,b_m$ be arbitrary real numbers. For $\forall~ \varepsilon>0$, the system
$$|lb_j-a_j|<\varepsilon \pmod {2\pi}, ~j=1,2,\ldots,m,$$
has a solution $l$ if and only if, for $q_1,q_2,\ldots,q_m\in \mathbb{Z}$,
$$q_1b_1+\cdots+q_mb_m=0,$$
implies
$$q_1a_1+\cdots+q_ma_m\equiv0 \pmod {2\pi}.$$
\end{lemma}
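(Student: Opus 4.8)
The plan is to recast the statement as a density problem on the $m$-dimensional torus and then read off the criterion from Pontryagin duality for compact abelian groups; the classical Kronecker theorem (Lemma~\ref{Kronecker-Approximation-Theorem}) can serve as an alternative engine, but the duality route exhibits most transparently why the linear relations $\sum_j q_j b_j=0$ are the only obstructions.

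First I would fix the ambient space. Write $\mathbb{T}^m=(\mathbb{R}/2\pi\mathbb{Z})^m$ and let $\pi:\mathbb{R}^m\to\mathbb{T}^m$ be the quotient map. The solvability of the system $|lb_j-a_j|<\varepsilon \pmod{2\pi}$ in $l$ for \emph{every} $\varepsilon>0$ is, by definition of the quotient metric, exactly the statement that the point $\pi(a_1,\ldots,a_m)$ lies in the closure of the one-parameter curve
$$
\mathcal{C}=\left\{\pi(lb_1,\ldots,lb_m):l\in\mathbb{R}\right\}\subseteq\mathbb{T}^m .
$$
Since $l\mapsto\pi(lb_1,\ldots,lb_m)$ is a continuous homomorphism from $\mathbb{R}$, the set $\mathcal{C}$ is a subgroup of $\mathbb{T}^m$, and its closure $\overline{\mathcal{C}}$ is a closed, hence compact, subgroup.

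Next I would invoke the duality description of membership in a closed subgroup. The continuous characters of $\mathbb{T}^m$ are precisely the maps $\chi_q(\theta_1,\ldots,\theta_m)=\exp\!\big(\imath\sum_{j=1}^m q_j\theta_j\big)$ indexed by $q=(q_1,\ldots,q_m)\in\mathbb{Z}^m$. The key structural fact is that a point $\theta\in\mathbb{T}^m$ belongs to $\overline{\mathcal{C}}$ if and only if $\chi_q(\theta)=1$ for every $q\in\mathbb{Z}^m$ whose character is trivial on $\overline{\mathcal{C}}$ (equivalently on $\mathcal{C}$, by continuity); this is the assertion that a closed subgroup equals the annihilator of its annihilator. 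It then remains to translate both triviality conditions into arithmetic. Because $\chi_q(\pi(lb))=\exp\!\big(\imath l\sum_j q_j b_j\big)$, the character $\chi_q$ equals $1$ for all $l\in\mathbb{R}$ exactly when $\sum_j q_j b_j=0$; and $\chi_q(\pi(a))=1$ exactly when $\sum_j q_j a_j\equiv0\pmod{2\pi}$. Feeding these into the duality criterion reproduces the claimed equivalence verbatim.

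Finally, a word on the two directions and the crux. The necessity (``only if'') direction is elementary and needs no machinery: if $\sum_j q_j b_j=0$ then $\sum_j q_j(lb_j)=0$ identically in $l$, so $\chi_q\equiv1$ on all of $\mathcal{C}$, and passing to the limit along any approximating sequence forces $\sum_j q_j a_j\equiv0\pmod{2\pi}$. The substantive content is the sufficiency (``if'') direction, where the arithmetic compatibility condition must be shown to genuinely place $\pi(a)$ inside $\overline{\mathcal{C}}$; this is exactly where the annihilator-of-the-annihilator theorem (equivalently the surjectivity half of Pontryagin duality for the compact group $\mathbb{T}^m$) is indispensable, and I expect this to be the main obstacle. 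If one prefers to avoid duality, the same conclusion follows by rescaling the system by $2\pi$, choosing a $\mathbb{Q}$-basis $1,c_1,\ldots,c_d$ for the rational span of $1,b_1,\ldots,b_m$, expressing each $b_j$ in that basis, and applying Lemma~\ref{Kronecker-Approximation-Theorem} to the $\mathbb{Q}$-independent reals $c_1,\ldots,c_d$; the rational relations among the $b_j$ then recover precisely the constraints $\sum_j q_j a_j\equiv0\pmod{2\pi}$, and the bookkeeping of these relations is the chief technical effort along that alternative route.
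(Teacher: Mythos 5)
Your proposal cannot be measured against an in-paper argument, because the paper supplies none: this lemma is quoted verbatim from \cite{Ban,VinetZ2012} and used as a black box. Judged on its own, your proof is correct and complete in its main (duality) route. The reduction of the approximation problem to the membership $\pi(a_1,\ldots,a_m)\in\overline{\mathcal{C}}$, where $\overline{\mathcal{C}}$ is the closure of the one-parameter subgroup $\{\pi(lb_1,\ldots,lb_m):l\in\mathbb{R}\}$ of $\mathbb{T}^m$, is exact for the sup-metric on the torus; the bipolar identification $\overline{\mathcal{C}}=(\overline{\mathcal{C}}^{\perp})^{\perp}$ is legitimate (for the compact group $\mathbb{T}^m$ it follows from the fact that characters of the quotient $\mathbb{T}^m/\overline{\mathcal{C}}$ separate points); and your two translations --- $\chi_q$ trivial on $\mathcal{C}$ iff $\sum_j q_jb_j=0$, and $\chi_q(\pi(a))=1$ iff $\sum_j q_ja_j\equiv0\pmod{2\pi}$ --- are precisely right, as is your observation that the ``only if'' direction needs nothing beyond continuity. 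One point deserves explicit credit: you silently resolved an ambiguity in the statement in the only tenable way, taking the solution $l$ to range over $\mathbb{R}$ rather than $\mathbb{Z}$. With integer $l$ the lemma as stated is false (take $m=1$, $b_1=2\pi$, $a_1=\pi$: the relation hypothesis is vacuous yet no integer $l$ works), and the exact relation $\sum_j q_jb_j=0$, rather than a congruence mod $2\pi$, together with the paper's use of real times $t_\varepsilon$, confirms the real-parameter reading. Your sketched alternative via Lemma~\ref{Kronecker-Approximation-Theorem} (rescale by $2\pi$, pass to a $\mathbb{Q}$-basis of the rational span, and track the rational relations) is closer in spirit to how the cited sources derive such statements, but, as you acknowledge, the basis bookkeeping is the genuine labor there; the duality argument buys a shorter and structurally transparent proof at the cost of invoking Pontryagin duality, and either route would serve as a self-contained justification that the paper itself omits.
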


Next, we characterize some Cayley graphs over dicyclic groups admitting PGFR when $n$ is a power of two.

\begin{theorem}\label{pgfrdicy-maintheorem2}
Let $n=2^k$, $k\geq2$. Let $S\subseteq T_{4n}$ such that
$S_1=S\cap \langle a \rangle $, $S_2= \langle a \rangle b$ and $S=S_1\cup S_2$.  Then $\Cay(T_{4n}, S)$ admits PGST (that is,  $\Cay(T_{4n}, S)$ admits PGFR) in the following two cases:
\begin{itemize}
\item[\rm(1)] $S_1=\{a^{\pm k_1}\}$ with $k_1$ is odd, $1\leq k_1\leq n-1$;
\item[\rm(2)] $S_1=\{a^{\pm k_1}\}\cup\left(\cup_{j=1}^r\left\{a^{2^{m_j}z}:\gcd(z,2)=1, 1\leq z\leq2^{k-m_j+1}-1\right\}\right)$, where $k_1~(1\leq k_1\leq n-1)$ is odd and $1\leq m_1<\cdots<m_r\leq k$.
\end{itemize}
\end{theorem}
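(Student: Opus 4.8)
The plan is to run the Case~2 analysis but sharpen the number-theoretic conclusion so that it forces $\alpha=0$, thereby yielding PGST and not merely PGFR. Since $n=2^{k}$ is even, $\omega=\exp(\pi\imath/n)$ is a primitive $2^{k+1}$-th root of unity whose minimal polynomial is the cyclotomic polynomial $\Phi_{2n}(x)=x^{n}+1$. First I would check $gSg^{-1}=S$ for all $g\in T_{4n}$ (so Lemma~\ref{pgfrdicy-lemma1} applies) and record the eigenvalues from Table~4: writing $T\subseteq\{1,\dots,2n-1\}$ for the symmetric exponent set with $S_1=\{a^{t}:t\in T\}$, one gets $\lambda_2-\lambda_1=-4n$, $\lambda_3=\lambda_4$, and $\mu_h=\sum_{t\in T}\omega^{ht}$. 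The crucial structural remark is that in \emph{both} cases the only odd exponents in $T$ are $\pm k_1$, because every set $V_{m_j}=\{2^{m_j}z:\ z\text{ odd}\}$ consists of even exponents (as $m_j\ge1$). To obtain PGST I will prove the stronger conclusion that every admissible integer relation forces $\sum_{h\text{ odd}}l_h$ to be \emph{even}; by Lemma~\ref{Kronecker-Approximation-Theorem2} this is precisely the solvability of the system (\ref{pgfrdicy-equation15}) with $\delta_2=\delta_1+\pi$, i.e.\ with $\alpha=0$.

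Next, given integers $l'_i,l_h$ satisfying $\sum_{i=2}^{4}l'_i(\lambda_i-\lambda_1)+\sum_h l_h(\mu_h-\lambda_1)=0$, I would rephrase this as $\omega$ being a root of an integer polynomial $L(x)$. Collecting the contribution of the $\mu_h$, I write $\sum_h l_h\mu_h=\sum_j c_j\omega^{j}$, where $c_j$ is the sum of those $l_h$ over pairs $(h,t)$ with $t\in T$, $1\le h\le n-1$ and $ht\equiv j\pmod{2n}$. Since $T$ is inverse-closed we have $c_j=c_{-j}$, so $L(x)=A+\sum_{j=1}^{2n-1}c_jx^{j}$ for some constant $A\in\mathbb{Z}$. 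Because $\Phi_{2n}(x)=x^{n}+1$ is monic and $L(\omega)=0$, we get $(x^{n}+1)\mid L(x)$ in $\mathbb{Z}[x]$. Reducing $L$ modulo $x^{n}+1$ via $x^{n}\equiv-1$ and equating each coefficient to zero gives $c_s=c_{s+n}$ for $1\le s\le n-1$, which together with $c_j=c_{-j}$ yields the key symmetry $c_s=c_{n-s}$ for all $s$.

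Finally I would extract the parity of $\sum_{h\text{ odd}}l_h$. For \emph{odd} $s$, the congruence $ht\equiv s\pmod{2n}$ with $t\in T$ forces $t$ odd, hence $t\in\{k_1,-k_1\}$; as $k_1$ is invertible modulo $2n$, exactly one $h\in\{1,\dots,n-1\}$ contributes, so $c_s=l_{h(s)}$ where $h(s)=\pm s\,k_1^{-1}\bmod 2n$ lies in $\{1,\dots,n-1\}$ and is odd. Using $n k_1^{-1}\equiv n\pmod{2n}$ one checks $h(n-s)=n-h(s)$, so $c_s=c_{n-s}$ becomes $l_h=l_{n-h}$ for every odd $h$. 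Because $n$ and $n/2=2^{k-1}$ are even, $h\ne n-h$, so the odd indices split into pairs $\{h,n-h\}$ of equal $l$-value and $\sum_{h\text{ odd}}l_h=\sum_{\text{pairs}}2l_h$ is even; by Lemma~\ref{Kronecker-Approximation-Theorem2} the graph admits PGST (hence PGFR) between the two vertices with $u-v=n$. I expect the main obstacle to be the residue bookkeeping modulo $2n$ in the reduction step — specifically verifying that the even-exponent families $V_{m_j}$ contribute nothing to the odd-indexed coefficients $c_s$, since this is exactly what lets the single pair $\pm k_1$ govern the parity and upgrades the conclusion from PGFR ($\sum_{h\text{ odd}}l_h\neq\pm1$) to PGST ($\sum_{h\text{ odd}}l_h$ even).
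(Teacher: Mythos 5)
Your proof is correct, but it takes a genuinely different route from the paper's. The paper proves both cases by showing that $1,\mu_1,\ldots,\mu_{2^{k-1}-1}$ are linearly independent over $\mathbb{Q}$ (using that the minimal polynomial of $\omega^{k_1}$ has degree $\phi(2n)=2^k$, the Ramanujan sums in case (2) being integers) and then applying the inhomogeneous Kronecker theorem (Lemma \ref{Kronecker-Approximation-Theorem}) to construct, for each $\varepsilon$, a time $t\in 2\pi\mathbb{Z}$ at which the even-indexed eigenvalues have phase near $0$ and the odd-indexed ones near $\pi$; this realizes (\ref{pgfrdicy-equation15}) with $\delta_1=0$, $\delta_2=\pi$ directly. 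You instead verify the relation-lattice criterion: encoding any integer relation $\sum_{i=2}^4 l'_i(\lambda_i-\lambda_1)+\sum_h l_h(\mu_h-\lambda_1)=0$ as an integer polynomial vanishing at $\omega$, dividing by $\Phi_{2n}(x)=x^n+1$, and using that the only odd exponents in $T$ are $\pm k_1$, you force $l_h=l_{n-h}$ for odd $h$ and hence $\sum_{h\ \text{odd}}l_h$ even. This is the cyclotomic-divisibility technique the paper uses for Theorem \ref{pgfrdicy-maintheorem1}, sharpened from ``$\neq\pm1$'' to a parity statement. Your route buys two things: it treats cases (1) and (2) uniformly (indeed it proves more --- any inverse-closed $S_1$ whose odd-exponent part is exactly $\{a^{\pm k_1}\}$ works, regardless of the structure of the even part), and it upgrades the PGFR criterion (\ref{pgfrdicy-equation23}) to a clean PGST criterion. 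The paper's route buys an explicit time sequence in $2\pi\mathbb{Z}$ and avoids re-deriving the Kronecker equivalence for the specific choice $\delta_2=\delta_1+\pi$. On that last point, your appeal to Lemma \ref{Kronecker-Approximation-Theorem2} is the one step asserted rather than proved: the lemma must be applied to the differences $\lambda_i-\lambda_1$, $\mu_h-\lambda_1$, and the common factor $\exp(\imath t\lambda_1)$ absorbed into the limit along a subsequence; since PGST tolerates an arbitrary unimodular factor this is harmless, and it is exactly the reduction the paper itself borrows from the proof of Theorem 2.4 in \cite{ChanDEKL21}, so the level of rigor matches the paper's. (One trivial slip: for $n=2^k$ even, the relevant character table is Table 3, not Table 4.)
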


\begin{proof}
By a simple calculation, we can easily verify $gSg^{-1}=S$ for all $g\in T_{4n}$ in two cases as mentioned above.

(1) Assume that $S_1=\{a^{\pm k_1}\}$, where $k_1$ is odd, $1\leq k_1\leq n-1$. Then the eigenvalues of $\Cay(T_{4n}, S)$ are
$$\lambda_1=2+2n,~ \lambda_2=2-2n,~ \lambda_3=\lambda_4=-2,~ \mu_h=\omega^{hk_1}+\omega^{-hk_1}=2\cos\left(\frac{\pi h k_1}{n}\right),~ 1\leq h \leq n-1.$$

%

Note that $\mu_h=-\mu_{n-h}$ for $1\leq h \leq 2^{k-1}-1$. It is known that the minimal polynomial of $\omega=\exp(\frac{\pi \imath}{n})$ over $\mathbb{Q}$ is the $2n$-th cyclotomic polynomial $\Phi_{2n}(x)$ with degree $\phi(2n)=2^k$, where $\phi$ is the Euler's phi-function, so is $\omega^{k_1}$ since $\gcd(2,k_1)=1$. We claim that $1, \mu_1,\ldots,\mu_{2^{k-1}-1}$ are linearly independent over $\mathbb{Q}$. Assume that $1, \mu_1, \ldots, \mu_{2^{k-1}-1}$ are linearly dependent over $\mathbb{Q}$. Then there exist $l_0, l_1,\ldots,l_{2^{k-1}-1}\in \mathbb{Q}$ that are not all zero, such that
$$l_0+l_1(\omega^{k_1}+\omega^{-k_1})+\cdots+l_{2^{k-1}-1}(\omega^{k_1(2^{k-1}-1)}+\omega^{-k_1(2^{k-1}-1)})=0.$$
Since
$$\omega^{-k_1 h}=-\omega^{k_1(2^k-h)}, ~1\leq h\leq2^{k-1}-1,$$
then $\omega^{k_1}$ is a root of polynomial
$$L_2(x)=l_0+l_1(x-x^{2^k-1})+l_2(x^2-x^{2^k-2})+\cdots+l_{2^{k-1}-1}(x^{2^{k-1}-1}-x^{2^{k-1}+1}).$$
where $l_0,l_1,\ldots,l_{2^{k-1}-1}$ are not all zero.
Note that the degree of $L_2(x)$ is at most $2^k-1$. But $2^k-1<2^k=\phi(2n)$, a contradiction. Hence $1,\mu_1,\ldots,\mu_{2^{k-1}-1}$ are linearly independent over $\mathbb{Q}$.
For $1\leq h\leq2^{k-1}-1$, define
$$a_h=\left\{
\begin{array}{cccc}
0,                 ~~\text{if}~h~\text{is~even},\\[0.2cm]
\frac{1}{2}, ~~\text{if}~h~\text{is~odd}.
\end{array}\right.
$$
By Lemma \ref{Kronecker-Approximation-Theorem}, for $\forall ~\varepsilon>0$, there exist $q, q_1,\ldots,q_{2^{k-1}-1}\in \mathbb{Z}$ such that
\begin{equation}\label{pgfrdicy-equation16}
\mid \mu_hq-q_h-a_h\mid<\frac{\varepsilon}{2\pi}, ~\text{i.e.,}~\mid \mu_h\cdot2\pi q-2\pi q_h-2\pi a_h\mid<\varepsilon.
\end{equation}
Since $\lambda_i$ ($1\leq i\leq 4$) and $\mu_{2^{k-1}}$ are integers, for $\forall ~\varepsilon>0$, there exist integers $q_i' ~(i=1,2,3,4)$ and $q_{2^{k-1}}$ such that
\begin{equation}\label{pgfrdicy-equation14}
\mid\lambda_i\cdot2\pi q-2\pi q_i'\mid<\varepsilon ~~(1\leq i\leq 4) \text{~and~} \mid\mu_{2^{k-1}}\cdot2\pi q-2\pi q_{2^{k-1}}\mid<\varepsilon.
\end{equation}
Combining (\ref{pgfrdicy-equation16}) and (\ref{pgfrdicy-equation14}), we arrive that (\ref{pgfrdicy-equation15}) holds with $\delta_1=0$ and $\delta_2=\pi$. That is, $\Cay(T_{4n},S_1)$ admits PGST with respect to a time sequence in $2\pi \mathbb{Z}$.

(2) Assume that $$S_1=\{a^{\pm k_1}\}\cup\left(\cup_{j=1}^r\left\{a^{2^{m_j}z}:\gcd(z,2)=1, 1\leq z\leq2^{k-m_j+1}-1\right\}\right),$$
where $k_1~(1\leq k_1\leq n-1)$ is odd and $1\leq m_1<\cdots<m_r\leq k$. Then, by Lemma \ref{pgfrdicy-lemma1}, the eigenvalues of $\Cay(T_{4n},S)$ are
$$\lambda_1=2+\sum\limits_{j=1}^r2^{k-m_j}+2^{k+1},~ \lambda_2=2+\sum\limits_{j=1}^r2^{k-m_j}-2^{k+1},~ \lambda_3=\lambda_4=\sum\limits_{j=1}^r2^{k-m_j}-2,$$
and
\begin{align*}
\mu_h&=2\cos(\frac{\pi k_1h}{n})+\frac{1}{2}\sum\limits_{j=1}^r\sum\limits_{\substack{1\leq z\leq 2^{k-m_j+1}-1\\ \gcd(z,2)=1}}(\omega^{2^{m_j}zh}+\omega^{-2^{m_j}zh})\\
     &=2\cos(\frac{\pi k_1h}{n})+\sum\limits_{j=1}^r c(h, 2^{k-m_j+1}).
\end{align*}
where $c(h,n)$ is the Ramanujan function (see \cite[Page 70]{McCarthy86}), which is an integer-valued function (see \cite[Corollary 2.2]{McCarthy86}). Then $1,\mu_1,\ldots,\mu_{2^{k-1}-1}$  are linearly independent over $\mathbb{Q}$. Similar to the proof of (1), we can get the desired result.
\qed\end{proof}
\begin{example}
{\em
Let $n=8$. Suppose that $S=\{a, a^{-1}\}\cup \langle a \rangle b$. Then the eigenvalues of the Cayley graph $\Cay(T_{32}, S)$ are
$$\lambda_1=18,~ \lambda_2=-14,~\lambda_3=\lambda_4=-2, ~ \mu_1=-\mu_7=\sqrt{2+\sqrt{2}},~ \mu_2=-\mu_6=\sqrt{2}, $$
$$\mu_3=-\mu_5=\sqrt{2-\sqrt{2}}.$$
Define $a_1=a_3=0$ and $a_2=\frac{1}{2}$. Since $1,\sqrt{2+\sqrt{2}},\sqrt{2}$ and $\sqrt{2-\sqrt{2}}$ are linearly independent over $\mathbb{Q}$, by Kronecker Approximation Theorem, for $\varepsilon>0$, there exist $q, q_1,q_2,q_3$ such that
\begin{equation*}
\mid \mu_h\cdot2\pi q-2\pi q_h-2\pi a_h\mid<\varepsilon, ~~h=1,2,3.
\end{equation*}
Since $\lambda_i$ ($1\leq i\leq 4$) and $\mu_4$ are integers, for $\forall ~\varepsilon>0$, there exist integers $q_i$ such that
\begin{equation*}
\mid\lambda_i\cdot2\pi q-2\pi q_i\mid<\varepsilon ~~(1\leq i\leq 4) \text{~and~} \mid\mu_4\cdot2\pi q-2\pi q_4\mid<\varepsilon.
\end{equation*}
By (\ref{pgfrdicy-equation15}), $\Cay(T_{32},S)$ admits PGST (that is,  $\Cay(T_{32}, S)$ admits PGFR) with respect to a time sequence in $2\pi \mathbb{Z}$.
}
\end{example}
For any integer $n$ and a prime number $p$, if $p^s|n$ and $p^{s+1}\nmid n$, $s\geq 0$, then we write $v_p(n)=s$. Let $\mathbb{N}$ and $\mathbb{Z}$ be the set of non-negative integers and the set of integers, respectively.

\begin{theorem}\label{pgfrdicy-maintheorem3}
Let $n=pqm$, where $p,q$ are distinct odd prime numbers and $m\in \mathbb{N}$. Let $S\subseteq T_{4n}$ such that
$S_1=S\cap \langle a \rangle =\{a^{\pm k_1},a^{\pm k_2},\ldots,a^{\pm k_r}\}, 1\leq k_1<\cdots<k_r\leq n-1$, $S_2= \langle a \rangle b$ and $S=S_1\cup S_2$.
If $v_p(k_1)=v_p(k_2)=\cdots=v_p(k_r)<v_p(n)$ and $v_q(k_1)=v_q(k_2)=\cdots=v_q(k_r)<v_q(n)$,
then $\Cay(T_{4n},S)$ does not admit PGFR.

\end{theorem}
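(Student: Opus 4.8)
The plan is to run the argument of Theorem~\ref{pgfrdicy-maintheorem1} in reverse: instead of showing that every integer relation among the eigenvalue differences keeps the obstruction sum away from $\pm1$, I would exhibit one relation whose obstruction sum equals $\pm1$. First I would verify $gSg^{-1}=S$ for all $g\in T_{4n}$ and, using Lemmas~\ref{pgfrdicy-lemma0} and~\ref{pgfrdicy-lemma1} with the relevant character table, record the eigenvalues
\[
\lambda_1=2r+2n,\qquad \lambda_2=2r-2n,\qquad \lambda_3=\lambda_4=2r-4r_{\text{odd}},\qquad \mu_h=\sum_{i=1}^{r}\bigl(\omega^{hk_i}+\omega^{-hk_i}\bigr),
\]
for $1\le h\le n-1$, where $\omega=\exp(\pi\imath/n)$ and $r_{\text{odd}}=\#\{i:k_i\text{ is odd}\}$. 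Since PGFR can occur only between vertices in the same half with $u-v=n$, by \eqref{pgfrdicy-equation13} (for $n$ odd) or \eqref{pgfrdicy-equation23} (for $n$ even) it suffices to produce integers $l'_2,l'_3,l'_4,l_1,\dots,l_{n-1}$ with
\[
\sum_{i=2}^{4}l'_i(\lambda_i-\lambda_1)+\sum_{h=1}^{n-1}l_h(\mu_h-\lambda_1)=0
\]
while the obstruction sum $\Omega$ (equal to $l'_3+l'_4+\sum_{h\text{ odd}}l_h$ when $n$ is odd, and to $\sum_{h\text{ odd}}l_h$ when $n$ is even) is $\pm1$. One such relation negates the PGFR criterion for all candidate pairs simultaneously, since \eqref{pgfrdicy-equation6} depends on $u,v$ only through $(-1)^n$ and $(-1)^h$.

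Next I would reformulate the relation through the polynomial encoding of \eqref{pgfrdicy-equation10}--\eqref{pgfrdicy-equation11}: it holds if and only if $\omega$ is a root of the associated $\mu$-form polynomial $L(x)$, i.e.\ $\Phi_{2n}(x)\mid L(x)$, and exactly as in the proof of Theorem~\ref{pgfrdicy-maintheorem1} a direct evaluation at $x=-1$ gives
\[
L(-1)=-2n\Bigl(2l'_2+(l'_3+l'_4)+\textstyle\sum_{h}l_h\Bigr)-4r_{\text{odd}}\Bigl((l'_3+l'_4)+\textstyle\sum_{h\text{ odd}}l_h\Bigr).
\]
The whole force of Theorem~\ref{pgfrdicy-maintheorem1} was that $\Phi_{2p^{k}}(-1)=p$ forced $p$ to divide the last bracket, hence $p\mid\Omega$. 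Here, however, $2n$ has the two distinct odd prime divisors $p,q$ and is not of the form $2\ell^{j}$, so $\Phi_{2n}(-1)=1$; the evaluation at $-1$ therefore imposes \emph{no} divisibility on $\Omega$. This is precisely the structural reason the conclusion reverses, and the task reduces to realizing the value $\Omega=\pm1$ concretely.

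To build the relation I would parametrize the integer solutions and track the obstruction gcd $d:=\gcd\{\Omega:\text{relations}\}$, the goal being $d=1$. Integrality of $\sum_h l_h\mu_h$ is a Galois-invariance condition: expanding in powers of $\omega$, the coefficient of $\omega^{j}$ must depend only on $\gcd(j,2n)$, so the admissible combinations are governed by Ramanujan sums (the integer-valued function $c(h,\cdot)$ used in Theorem~\ref{pgfrdicy-maintheorem2}) and by the Möbius values indexed by divisors $e\mid 2n$. Solving the integer equation for $l'_2$ and $l'_3+l'_4$ then pins $\Omega$ down modulo $2n$, reducing the problem to a single congruence $\Omega\equiv\pm1\pmod{2n}$, which by the Chinese Remainder Theorem can be attacked one prime power at a time. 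The hypotheses $v_p(k_1)=\cdots=v_p(k_r)<v_p(n)$ and $v_q(k_1)=\cdots=v_q(k_r)<v_q(n)$ enter exactly here: under the reductions $\Phi_{2n}(x)\equiv\Phi_{2n/p^{v_p(n)}}(x)^{\phi(p^{v_p(n)})}\pmod p$ and its $q$-analogue, the common valuation makes all generators $k_i$ transform alike, so the local equations at $p$ and at $q$ become solvable and neither $p$ nor $q$ divides $d$. Splicing the local solutions and then choosing $l'_2$ to clear the integer part yields a relation with $\Omega=\pm1$.

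The main obstacle is this local-to-global step. One must carry out the cyclotomic/Galois bookkeeping carefully enough to express the obstruction contribution of each gcd-class in closed form and to verify solvability of the congruence at $p$ and at $q$ under the equal-valuation hypotheses. I expect the delicate point to be showing that these hypotheses are genuinely what is needed, not merely convenient: if some $k_i$ had valuation equal to $v_p(n)$, or if the $p$-valuations differed among the $k_i$, the mod-$p$ reduction would degenerate toward the single-prime-power behaviour of Theorem~\ref{pgfrdicy-maintheorem1} and could force $p\mid\Omega$, obstructing the value $\pm1$. Establishing this dichotomy cleanly, rather than the routine eigenvalue and balancing computations, is where the real content of the theorem lies.
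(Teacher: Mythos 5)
Your overall framework is sound: since the set of integer relations among $\{\lambda_i-\lambda_1,\mu_h-\lambda_1\}$ is a lattice and the obstruction sum $\Omega$ is linear on it, the criterion \eqref{pgfrdicy-equation13}/\eqref{pgfrdicy-equation23} shows that disproving PGFR amounts to exhibiting one relation with $\Omega=\pm1$; and your observation that $\Phi_{2n}(-1)=1$ when $2n$ has two distinct odd prime divisors correctly explains why the divisibility obstruction of Theorem~\ref{pgfrdicy-maintheorem1} disappears. But that observation is only a heuristic for why the conclusion \emph{could} reverse; it proves nothing. The actual content of the theorem is the construction of the violating relation, and your proposal never produces it. The entire ``local-to-global'' step --- parametrizing solutions, tracking the obstruction gcd $d$, reducing to a congruence $\Omega\equiv\pm1\pmod{2n}$, and claiming the valuation hypotheses make ``the local equations at $p$ and at $q$ solvable'' --- is a program, not an argument: no relation is written down, no reason is given why the congruence is solvable, and indeed your claim that fixing $l'_2$ and $l'_3+l'_4$ ``pins $\Omega$ down modulo $2n$'' is not justified (the solution set is a lattice of large rank, and nothing ties $\Omega$ to a residue class mod $2n$). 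You yourself flag this step as the open obstacle, which is an accurate self-assessment: the gap sits exactly at the theorem's core.

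For comparison, the paper closes this gap with a concrete root-of-unity construction that your write-up lacks. Writing $s=v_p(k_i)$ (common to all $i$ by hypothesis), the vanishing sum $\sum_{j=0}^{p-1}\omega_p^{jk_i/p^s}=0$, i.e.\ $1+2\sum_{j=1}^{(p-1)/2}\cos\bigl(2\pi j\tfrac{k_i}{p^s}/p\bigr)=0$, is multiplied by $2\cos(\pi k_i/n)$ and summed over $i$, yielding the integer relation $\mu_1+\sum_{j=1}^{(p-1)/2}\bigl(\mu_{2\frac{mq}{p^s}j+1}+\mu_{2\frac{mq}{p^s}j-1}\bigr)=0$, supported on $p$ \emph{odd} indices; multiplying instead by $2\cos(2\pi k_i/n)$ gives the analogous relation on $p$ \emph{even} indices. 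Here the hypotheses enter precisely to guarantee that $\frac{mq}{p^s}$ is an integer ($v_p(k_i)<v_p(n)$) and that one common shift works for every $k_i$ (equal valuations). By Lemma~\ref{Kronecker-Approximation-Theorem2} applied to \eqref{pgfrdicy-equation6}, these force $p\delta_2\equiv0$ and $p\delta_1\equiv0\pmod{2\pi}$; the same construction at $q$ gives $q(\delta_1-\delta_2)\equiv0$, and B\'ezout ($sp+tq=1$) yields $\delta_1\equiv\delta_2\pmod{2\pi}$, i.e.\ $\beta=0$, contradicting PGFR. In lattice language this is exactly your program carried out: the two constructions give relations with obstruction sums $p$ and $q$, and coprimality combines them into one with $\Omega=\pm1$. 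Without some equivalent of these explicit relations, your proposal does not constitute a proof.
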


\begin{proof}
We prove this theorem by contradiction. Assume that $\Cay(T_{4n},S)$ admits PGFR.
By Lemma \ref{pgfrdicy-lemma1},
$$\mu_h=2\sum\limits_{i=1}^r\cos\left(\frac{\pi k_i h}{n}\right), ~1\leq h\leq n-1$$
are some eigenvalues of $\Cay(T_{4n},S)$.
Let $\omega_p=\exp(\frac{2\pi \imath}{p})$ be a $p$-th root of unity. Suppose that $v_p(k_i)=s$, $1\leq i\leq r$. Then $\gcd\left(\frac{k_i}{p^s}, p\right)=1$, $1\leq i\leq r$. So $\omega_p^{\frac{k_i}{p^s}}$ is also a  $p$-th root of unity. Note that $\sum\limits_{j=0}^{p-1}\omega_p^{\frac{k_i}{p^s}j}=0$, that is,
\begin{equation}\label{pgfrdicy-equation17}
1+2\sum\limits_{j=1}^{\frac{p-1}{2}}\cos\left(\frac{2\pi j\frac{k_i}{p^s} }{p}\right)=0, ~1\leq i\leq r.
\end{equation}
Multiplying both sides of (\ref{pgfrdicy-equation17}) by $2\cos\left(\frac{\pi k_i}{n}\right)$, we have
$$2\cos\left(\frac{\pi k_i}{n}\right)+2\sum\limits_{j=1}^{\frac{p-1}{2}}\left(\cos\left(\frac{\pi k_i(2\frac{mq}{p^s}j+1)}{n}\right)+\cos\left(\frac{\pi k_i(2\frac{mq}{p^s}j-1)}{n}\right)\right)=0, ~1\leq i\leq r.$$
Adding up the above equations from $i=1$ to $i=r$, we get
\begin{equation*}
\mu_1+\sum\limits_{j=1}^{\frac{p-1}{2}}\left(\mu_{2\frac{mq}{p^s}j+1}+\mu_{2\frac{mq}{p^s}j-1}\right)=0.
\end{equation*}
Since $v_p(k_i)<v_p(n)$,  $\frac{mq}{p^s}$ is an integer. Recall that $\alpha+\beta=\exp(\imath \delta_1)$ and $\alpha-\beta=\exp(\imath \delta_2)$. By (\ref{pgfrdicy-equation6}) and Lemma \ref{Kronecker-Approximation-Theorem2}, we have
\begin{equation}\label{pgfrdicy-equation18}
p\delta_2\equiv0 \pmod {2\pi}.
\end{equation}
Multiplying both sides of (\ref{pgfrdicy-equation17}) by $2\cos\left(\frac{2\pi k_i}{n}\right)$, we have
$$2\cos\left(\frac{2\pi k_i}{n}\right)+2\sum\limits_{j=1}^{\frac{p-1}{2}}\left(\cos\left(\frac{\pi k_i(2\frac{mq}{p^s}j+2)}{n}\right)+\cos\left(\frac{\pi k_i(2\frac{mq}{p^s}j-2)}{n}\right)\right)=0, ~1\leq i\leq r.$$
Adding up the above equations from $i=1$ to $i=r$, we get
\begin{equation*}
\mu_2+\sum\limits_{j=1}^{\frac{p-1}{2}}\left(\mu_{2\frac{mq}{p^s}j+2}+\mu_{2\frac{mq}{p^s}j-2}\right)=0.
\end{equation*}
By (\ref{pgfrdicy-equation6}) and Lemma \ref{Kronecker-Approximation-Theorem2}, we have
\begin{equation}\label{pgfrdicy-equation19}
p\delta_1\equiv0 \pmod {2\pi}.
\end{equation}
Combining (\ref{pgfrdicy-equation18}) and (\ref{pgfrdicy-equation19}), we have
\begin{equation*}\label{pgfrdicy-equation20}
p(\delta_1-\delta_2)\equiv0 \pmod {2\pi}.
\end{equation*}
Similarly, we have
$$q(\delta_1-\delta_2)\equiv0 \pmod {2\pi}. $$
Since $p$ and $q$ are distinct prime numbers, there exist integers $s,t$ such that $sp+tq=1$. Then
$$\delta_1-\delta_2=sp(\delta_1-\delta_2)+tq(\delta_1-\delta_2)\equiv0 \pmod {2\pi}, $$
which implies $\beta=0$, a contradiction.
%
%
%
\qed\end{proof}

The proof of the following result is similar to that of \cite[Lemma 4.3.2]{vanBommelphd19}. Hence we omit the detail here.

\begin{lemma}
Let $p>1$ and $q\ge1$ be two odd integers, and $n=kp$ with $k$ a positive integer.  Suppose that $0\leq a<k$ is an integer. Then
\begin{equation}\label{pgfrdicy-equation22}
\sum_{j=0}^{p-1}(-1)^j\cos\left(\frac{(a+jk)q\pi}{n}\right)=0.
\end{equation}
\end{lemma}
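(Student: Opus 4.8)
The plan is to rewrite the alternating cosine sum as the real part of a single geometric series. First I would substitute $n=kp$ into the argument of the cosine, so that
$$\frac{(a+jk)q\pi}{n}=\frac{aq\pi}{kp}+\frac{jq\pi}{p}=\phi+j\psi,\qquad \phi:=\frac{aq\pi}{kp},\ \ \psi:=\frac{q\pi}{p}.$$
Using $(-1)^{j}=\exp(\imath j\pi)$ and $\cos x=\mathrm{Re}\,\exp(\imath x)$, the left-hand side of (\ref{pgfrdicy-equation22}) becomes
$$\sum_{j=0}^{p-1}(-1)^{j}\cos(\phi+j\psi)=\mathrm{Re}\left(\exp(\imath\phi)\sum_{j=0}^{p-1}\bigl(-\exp(\imath\psi)\bigr)^{j}\right),$$
because $(-1)^{j}\exp(\imath j\psi)=\bigl(-\exp(\imath\psi)\bigr)^{j}$. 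Thus everything reduces to evaluating the inner geometric sum with ratio $r:=-\exp(\imath\psi)=-\exp(\imath q\pi/p)$; note that the parameter $a$ survives only inside the harmless phase $\exp(\imath\phi)$ and so is irrelevant to the vanishing.

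The crucial computation is the value of $r^{p}$. Since $r^{p}=(-1)^{p}\exp(\imath q\pi)=(-1)^{p}(-1)^{q}$ and both $p$ and $q$ are odd, we have $(-1)^{p}=(-1)^{q}=-1$ and hence $r^{p}=1$. Provided $r\neq1$, the finite geometric series formula gives
$$\sum_{j=0}^{p-1}r^{j}=\frac{r^{p}-1}{r-1}=0,$$
so the whole expression equals $\mathrm{Re}\bigl(\exp(\imath\phi)\cdot0\bigr)=0$, which is exactly (\ref{pgfrdicy-equation22}). This is where the two oddness hypotheses enter essentially: it is precisely the factors $(-1)^{p}$ and $(-1)^{q}$ both equalling $-1$ that make $r$ a $p$-th root of unity, so that the alternating sign and the $1/p$-fractional angle conspire to close the series back to $1$.

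The one point that needs care—and the step I expect to be the only real obstacle—is justifying $r\neq1$. Here $r=1$ occurs exactly when $\psi\equiv\pi\pmod{2\pi}$, i.e.\ when $q/p$ is an (odd) integer, equivalently $p\mid q$; in that degenerate case the geometric sum equals $p$ and the expression collapses to $p\cos\phi$, so nondegeneracy cannot be dropped. In the setting where the lemma is applied $p$ and $q$ are distinct odd primes, whence $p\nmid q$ and $r\neq1$ is automatic, so I would simply record this coprimality as the operative hypothesis. With $r\neq1$ in hand, the remaining manipulations are the routine geometric-series algebra indicated above, and the identity follows.
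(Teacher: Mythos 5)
Your core computation is correct, and since the paper does not actually prove this lemma (it merely says the proof is ``similar to that of Lemma 4.3.2'' in van Bommel's thesis and omits all details), there is no in-paper argument to compare against; the geometric-series reduction you use is the natural route. Writing the sum as $\mathrm{Re}\bigl(\exp(\imath\phi)\sum_{j=0}^{p-1}r^j\bigr)$ with $r=-\exp(\imath q\pi/p)$, the oddness of $p$ and $q$ gives $r^p=(-1)^p(-1)^q=1$, so the sum vanishes provided $r\neq1$. More importantly, your degenerate-case analysis exposes a genuine defect in the paper's statement: when $p\mid q$ (which, for odd $p,q$, is exactly when $r=1$), the left-hand side of (\ref{pgfrdicy-equation22}) equals $p\cos\phi$, which need not vanish. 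For instance $p=q=3$, $k=1$, $a=0$ gives the value $3$. So the lemma as printed, which assumes only that $p,q$ are odd, is false; the hypothesis $p\nmid q$ must be added.

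Where your write-up goes wrong is in how you discharge that hypothesis. You assert that in the setting where the lemma is applied, $p$ and $q$ are distinct odd primes, but that is not true of this paper. The lemma is invoked in the proof of Theorem \ref{pgfrdicy-maintheorem4}, where $n=2^sp$ with $p$ an odd \emph{integer} (not necessarily prime), and it is applied with $q=q_i=k_i/2^{s'}$, an arbitrary odd integer; nothing there rules out $p\mid q_i$ (e.g. $n=6$, $s=1$, $s'=0$, $k_1=3$ gives $q_1=3=p$). The distinct-odd-primes setting you have in mind is Theorem \ref{pgfrdicy-maintheorem3}, which does not use this lemma at all. So the coprimality cannot be inherited from context: it has to be stated as an explicit hypothesis of the lemma, and the resulting restriction then propagates to Theorem \ref{pgfrdicy-maintheorem4}, whose proof as written relies on the identity precisely in cases where it can fail. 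In short, your proof is correct once the hypothesis $p\nmid q$ is added; only the justification you give for that hypothesis needs to be replaced by an explicit assumption.
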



\begin{theorem}\label{pgfrdicy-maintheorem4}
Let $n=2^sp$, where $p$ is an odd integer and $s\geq1$ is an integer. Let $S\subseteq T_{4n}$ such that
$S_1=\{a^{\pm k_1},a^{\pm k_2},\ldots,a^{\pm k_r}\}, 1\leq k_1<\cdots<k_r\leq n-1$, $S_2=\langle a \rangle b$ and $S=S_1\cup S_2$.
If $v_2(k_1)=v_2(k_2)=\cdots=v_2(k_r)=s'<s$, then $\Cay(T_{4n},S)$ does not admit PGFR.
\end{theorem}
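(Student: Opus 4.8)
The plan is to argue by contradiction: assuming $\Cay(T_{4n},S)$ admits PGFR (which, by the reduction preceding (\ref{pgfrdicy-equation6}), can only occur between $u,v$ with $u-v=n$, so we are in the $n$ even case governed by (\ref{pgfrdicy-equation15})), I will exhibit two integer relations among the eigenvalues that, via Lemma \ref{Kronecker-Approximation-Theorem2}, force $\delta_1\equiv\delta_2\equiv0\pmod{2\pi}$. Since $\alpha+\beta=\exp(\imath\delta_1)$ and $\alpha-\beta=\exp(\imath\delta_2)$, this yields $\alpha+\beta=\alpha-\beta=1$, hence $\beta=0$, contradicting PGFR from $u$ to $v$.

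First I would record the spectrum from Lemma \ref{pgfrdicy-lemma1} and Table 4: $\lambda_1=2r+2n$, $\lambda_2=2r-2n$, $\lambda_3=\lambda_4=2\sum_{i=1}^r(-1)^{k_i}$, and $\mu_h=2\sum_{i=1}^r\cos(\pi k_ih/n)$ for $1\le h\le n-1$. I also set $\mu_0:=2\sum_i\cos 0=2r$; since the hypothesis $v_2(k_i)=s'$ forces all $k_i$ to have the same parity, $\lambda_3=2r(-1)^{k_1}$, so that $\mu_0=(-1)^{k_1}\lambda_3$. This identity is what will let me handle the boundary term later.

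The core step is to convert the preceding Lemma, equation (\ref{pgfrdicy-equation22}), into a relation among the $\mu_h$. Writing $k_i=2^{s'}q_i$ with $q_i$ odd (legitimate since $v_2(k_i)=s'$), I apply (\ref{pgfrdicy-equation22}) with $k=2^s$, $n=2^sp$, $q=q_i$, and $a=2^{s'}a'$ for $0\le a'<2^{s-s'}$ (allowed because $s'<s$ gives $2^{s'}a'<2^s$). The key identity $(2^{s'}a'+j2^s)q_i=k_i(a'+j2^{s-s'})$ shows that the $j$-th cosine equals $\cos(\pi k_ih_j/n)$ with $h_j=a'+j2^{s-s'}$, and crucially $h_j$ is independent of $i$. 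Summing over $i$ and doubling gives the single relation
\[
\sum_{j=0}^{p-1}(-1)^j\mu_{a'+j2^{s-s'}}=0.
\]
Because $s'<s$, the step $2^{s-s'}$ is even, so all indices $h_j$ share the parity of $a'$; because $p$ is odd, $\sum_{j=0}^{p-1}(-1)^j=1$ and $\sum_{j=1}^{p-1}(-1)^j=0$. One checks $h_j<p\,2^{s-s'}\le n$, so the nonzero indices lie in $\{1,\dots,n-1\}$.

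Finally I would use $a'$ twice. With $a'=1$ all indices are odd, hence carry target $\delta_2$ in (\ref{pgfrdicy-equation15}); Lemma \ref{Kronecker-Approximation-Theorem2} then gives $1\cdot\delta_2\equiv0\pmod{2\pi}$. With $a'=0$ all indices are even (target $\delta_1$), but the $j=0$ term is the non-eigenvalue $\mu_0$; replacing it by $(-1)^{k_1}\lambda_3$ produces a genuine relation among $\lambda_3$ and even-indexed $\mu_h$, and Lemma \ref{Kronecker-Approximation-Theorem2} gives $((-1)^{k_1}+\sum_{j=1}^{p-1}(-1)^j)\delta_1=(-1)^{k_1}\delta_1\equiv0$, so $\delta_1\equiv0\pmod{2\pi}$. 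Together these force $\beta=0$. I expect the main obstacle to be precisely the uniformity across $i$: it is the shared value $s'=v_2(k_i)$ that makes $\{h_j\}$ independent of $i$, so that the per-$i$ identities of (\ref{pgfrdicy-equation22}) assemble into one relation among actual eigenvalues; the secondary nuisance is the $\mu_0$ boundary term, which the identity $\mu_0=(-1)^{k_1}\lambda_3$ resolves.
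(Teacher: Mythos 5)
Your proposal is correct, and it genuinely proves the theorem; it shares the paper's engine but finishes along a different route. Both arguments hinge on the same two facts: (i) the hypothesis $v_2(k_1)=\cdots=v_2(k_r)=s'$ makes the index set $h_j=a+j2^{s-s'}$ independent of $i$, so the per-$i$ identities from (\ref{pgfrdicy-equation22}) really do assemble into eigenvalue relations $\sum_{j=0}^{p-1}(-1)^j\mu_{a+j2^{s-s'}}=0$; and (ii) since $2^{s-s'}$ is even, each such relation involves $\mu_h$'s of a single parity class. The divergence is in how these relations yield a contradiction. The paper uses offsets $a=1$ and $a=2$, centers both relations at $\lambda_1$, and subtracts them to produce integers $l_h$ with $\sum_{h}l_h(\mu_h-\lambda_1)=0$ and $\sum_{h\ \text{odd}}l_h=-1$, contradicting criterion (\ref{pgfrdicy-equation23}). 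You instead use offsets $a'=1$ and $a'=0$, dispose of the boundary term $\mu_0=2r$ in the $a'=0$ relation via the identity $\mu_0=(-1)^{k_1}\lambda_3$ (valid exactly because the hypothesis forces all $k_i$ to share one parity), and then feed the two relations directly into Lemma \ref{Kronecker-Approximation-Theorem2} applied to the system (\ref{pgfrdicy-equation15}), obtaining $\delta_2\equiv 0$ and $\delta_1\equiv 0 \pmod{2\pi}$, hence $\beta=0$; this is the mechanism of the paper's proof of Theorem \ref{pgfrdicy-maintheorem3}, transplanted to the present setting. Both routes are sound, and each buys something: the paper's stays entirely within its unified criterion (\ref{pgfrdicy-equation23}) and never has to discuss $\mu_0$; yours yields the explicit phase conclusion $\delta_1\equiv\delta_2\equiv 0$, and it has a small technical advantage in that your offsets $\{0,1\}$ always satisfy the lemma's hypothesis $0\le a<2^{s-s'}$, whereas the paper's choice $a=2$ falls outside that range when $s-s'=1$ (the identity (\ref{pgfrdicy-equation22}) does persist at $a=2^{s-s'}$, since the sum only changes sign under $a\mapsto a+2^{s-s'}$, but that extension needs to be stated). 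Your cosmetic variation of applying (\ref{pgfrdicy-equation22}) at scale $n$ with $k=2^s$, $q=q_i$, $a=2^{s'}a'$, rather than at scale $2^{s-s'}p$ as the paper does, is the same computation.
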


\begin{proof}
Suppose that $k_i=2^{s'}q_i$, where $q_i$ is an odd integer, for $1\leq i\leq r$. Let $k=2^{s-s'}$. By (\ref{pgfrdicy-equation22}), we have
\begin{equation*}
\sum\limits_{j=0}^{p-1}(-1)^j\cos\left(\frac{(a+j2^{s-s'})q_i\pi}{2^{s-s'}p}\right)=0, ~1\leq i\leq r.
\end{equation*}
Then
\begin{equation*}
\sum\limits_{i=1}^{r}\sum\limits_{j=0}^{p-1}(-1)^j\cos\left(\frac{(a+j2^{s-s'})q_i\pi}{2^{s-s'}p}\right)=0.
\end{equation*}
Note Lemma \ref{pgfrdicy-lemma1} that
$$\mu_h=2\sum_{i=1}^r\cos\left(\frac{\pi k_i h}{n}\right)=2\sum_{i=1}^r\cos\left(\frac{\pi q_i h}{2^{s-s'}p}\right)$$
are some eigenvalues of $\Cay(T_{4n},S)$.
Therefore
$$\sum\limits_{j=0}^{p-1}(-1)^j\mu_{a+j2^{s-s'}}=0.$$
Since
$$\sum\limits_{j=0}^{p-1}(-1)^j(\mu_{a+j2^{s-s'}}-\lambda_1)=\sum\limits_{j=0}^{p-1}(-1)^j\mu_{a+j2^{s-s'}}-\sum\limits_{j=0}^{p-1}(-1)^j\lambda_1=-\lambda_1,$$
let $a=1$ or $a=2$, then
$$\sum\limits_{j=0}^{p-1}(-1)^j(\mu_{1+j2^{s-s'}}-\lambda_1)=-\lambda_1,
\text{~~and~~}
\sum\limits_{j=0}^{p-1}(-1)^j(\mu_{2+j2^{s-s'}}-\lambda_1)=-\lambda_1.$$
Define
\begin{equation*}
l_h=\left\{
\begin{array}{rcl}
(-1)^j, &~&\text{if}~h=2+j2^{s-s'}, ~j=0,1, \ldots, p-1,\\[0.2cm]
(-1)^{j+1}, &~&\text{if}~h=1+j2^{s-s'},~ j=0,1, \ldots, p-1,\\[0.2cm]
0, &~&\text{otherwise},
\end{array}\right.
\end{equation*}
 and $l'_i=0,~ 2\leq i\leq4$. Notice that
$$\sum\limits_{i=2}^4l'_i(\lambda_i-\lambda_1)+\sum\limits_{h=1}^{n-1}l_h(\mu_h-\lambda_1)=0,$$
but
$$\sum\limits_{h~\text{odd}}l_h=-1,$$
a contradiction to (\ref{pgfrdicy-equation23}). Thus, $\Cay(T_{4n},S)$ does not admit PGFR.
\qed
\end{proof}

See the following theorem that if $S$ does not satisfy the conditions of Theorems \ref{pgfrdicy-maintheorem3} and \ref{pgfrdicy-maintheorem4}, then $\Cay(T_{4n},S)$ may admit PGFR.
\begin{theorem}
Let $n=p^tm~(t\geq1)$, where $p$ is  an odd prime number and $m$ is a positive integer. Let $S=\{a^{m}, a^{-m}\}\cup \langle a \rangle b$. Then $\Cay(T_{4n}, S)$ admits PGFR.
\end{theorem}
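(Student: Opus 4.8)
The plan is to reproduce the cyclotomic-polynomial argument from the proof of Theorem \ref{pgfrdicy-maintheorem1}, specialized to the case $r=1$, $k_1=m$. First I would record the spectrum via Lemma \ref{pgfrdicy-lemma1}: the one-dimensional representations give $\lambda_1=2+2n$, $\lambda_2=2-2n$, $\lambda_3=\lambda_4=2(-1)^m$, while the two-dimensional representations give
$$\mu_h=\omega^{hm}+\omega^{-hm}=2\cos\!\left(\frac{\pi hm}{n}\right)=2\cos\!\left(\frac{\pi h}{p^t}\right),\qquad 1\le h\le n-1,$$
using $n=p^tm$. The decisive simplification is that $\mu_h=\zeta^h+\zeta^{-h}$ with $\zeta=\exp(\imath\pi/p^t)$ a primitive $2p^t$-th root of unity, so the entire spectrum sits inside the single cyclotomic field $\mathbb{Q}(\zeta)$, exactly the situation handled in Theorem \ref{pgfrdicy-maintheorem1} (which is the case $m=1$).

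Next I would take an arbitrary integer relation $\sum_{i=2}^4 l'_i(\lambda_i-\lambda_1)+\sum_{h=1}^{n-1}l_h(\mu_h-\lambda_1)=0$ and convert it, as in Theorem \ref{pgfrdicy-maintheorem1}, into the statement that $\zeta$ is a root of an explicit polynomial $L(x)\in\mathbb{Z}[x]$, where each $\mu_h$ contributes a term $x^h+x^{M-h}$ with $M$ a fixed multiple of $2p^t$ exceeding $n-1$ (so that $\zeta^{-h}=\zeta^{M-h}$ keeps exponents non-negative). Since $\Phi_{2p^t}(x)$ is the minimal polynomial of $\zeta$ over $\mathbb{Q}$, it divides $L(x)$, and by Gauss's lemma the quotient is integral; evaluating at $x=-1$ together with $\Phi_{2p^t}(-1)=\Phi_{p^t}(1)=p$ then yields $p\mid L(-1)$. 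A direct computation using $(-1)^h+(-1)^{M-h}=2(-1)^h$ and $p\mid n$ collapses this to
$$L(-1)\equiv 2(l'_3+l'_4)\big((-1)^m-1\big)-4\sum_{h \text{ odd}}l_h \pmod{p}.$$

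Finally I would split on the parity of $m$, which coincides with the parity of $n$. When $m$ is even ($n$ even) the first term vanishes, so $p\mid 4\sum_{h\text{ odd}}l_h$; as $p$ is odd this forces $\sum_{h\text{ odd}}l_h\equiv0\pmod p$, hence $\sum_{h\text{ odd}}l_h\neq\pm1$, which is precisely condition (\ref{pgfrdicy-equation23}). When $m$ is odd ($n$ odd) the congruence reads $p\mid 4\big(l'_3+l'_4+\sum_{h\text{ odd}}l_h\big)$, so $l'_3+l'_4+\sum_{h\text{ odd}}l_h\neq\pm1$, which is exactly condition (\ref{pgfrdicy-equation13}). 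In either case the relevant criterion holds, and therefore $\Cay(T_{4n},S)$ admits PGFR between vertices $u,v$ with $u-v=n$. I expect the only delicate steps to be the bookkeeping that turns the integer relation into a genuine polynomial having $\zeta$ as a root (handling the negative exponents $\zeta^{-h}$ cleanly) and invoking the identity $\Phi_{2p^t}(-1)=p$; once these are secured, the modular reduction and the parity split are routine.
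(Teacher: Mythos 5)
Your proposal is correct and follows essentially the same route as the paper's own proof: the same cyclotomic argument with $\Phi_{2p^t}$, evaluation at $x=-1$ using $\Phi_{2p^t}(-1)=p$, and reduction to conditions (\ref{pgfrdicy-equation13}) and (\ref{pgfrdicy-equation23}) according to the parity of $m$ (equivalently of $n$). The only differences are cosmetic: you unify the paper's two cases via $\lambda_3=\lambda_4=2(-1)^m$ and split on parity only at the end, and your device of writing $\zeta^{-h}=\zeta^{M-h}$ with $M$ a fixed even multiple of $2p^t$ is actually cleaner than the paper's Laurent-style terms $x^{h}+x^{-h}$ in $L_3, L_4$.
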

\begin{proof}
Write $\omega_{2p^t}=\exp(\frac{\pi \imath}{p^t})$. We consider the following two cases.

\noindent\emph{Case 1.} $m$ is odd.
Then the eigenvalues of $\Cay(T_{4n}, S)$ are
$$\lambda_1=2+2p^tm,~\lambda_2=2-2p^tm,~\lambda_3=\lambda_4=-2, ~ \mu_h=\omega_{2p^t}^h+\omega_{2p^t}^{-h}, ~1\leq h\leq p^tm-1.$$
Suppose that $l'_i, l_h$ ($2\leq i \leq 4, 1\leq h\leq p^tm-1$) are integers satisfying
$$\sum\limits_{i=2}^4l'_i(\lambda_i-\lambda_1)+\sum\limits_{h=1}^{p^tm-1}l_h(\mu_h-\lambda_1)=0,$$
that is,
$$-4p^tml'_2-(l'_3+l'_4)(4+2p^tm)+ \sum\limits_{h=1}^{p^tm-1}l_h\left(\omega_{2p^t}^{h} +\omega_{2p^t}^{-h}\right)-\sum\limits_{h=1}^{p^tm-1}l_h(2+2p^tm)=0.$$
Hence, $\omega_{2p^t}$ is a root of the polynomial
\begin{equation}\label{pgfrdicy-equation25}
L_3(x)=-4p^tml'_2-(l'_3+l'_4)(4+2p^tm) +\sum\limits_{h=1}^{p^tm-1}l_h\left(x^{h}+x^{-h}\right) -\sum\limits_{h=1}^{p^tm-1}l_h(2+2p^tm).
\end{equation}
Let $\Phi_{2p^t}(x)$ be the $2p^t$-th cyclotomic polynomial. Then there exists a polynomial $g_3(x)$ such that
\begin{equation}\label{pgfrdicy-equation26}
L_3(x)=\Phi_{2p^t}(x)g_3(x).
\end{equation}
Let $x=-1$. By (\ref{pgfrdicy-equation25}), we have
\begin{equation*}
\begin{aligned}
L_3(-1)=-4p^tml'_2-(l'_3+l'_4)(4+2p^tm) -2p^tm\sum\limits_{h~\text{even}}l_h -(4+2p^tm)\sum\limits_{h~\text{odd}}l_h.
\end{aligned}
\end{equation*}
Since $\Phi_{2p^t}(-1)=\Phi_{p^t}(1)=p$. Combining with (\ref{pgfrdicy-equation26}), we have $p\mid L_3(-1)$. Then
$$p\mid -4(l'_3+l'_4+\sum\limits_{h~\text{odd}}l_h),$$
which implies that
$$l'_3+l'_4+\sum\limits_{h~\text{odd}}l_h\neq \pm1.$$
By (\ref{pgfrdicy-equation13}),  $\Cay(T_{4n}, S)$ admits PGFR.

\noindent\emph{Case 2.} $m$ is even.
Then the eigenvalues of $\Cay(T_{4n}, S)$ are
$$\lambda_1=2+2p^tm,~\lambda_2=2-2p^tm,~\lambda_3=\lambda_4=2, ~ \mu_h=\omega_{2p^t}^h+\omega_{2p^t}^{-h}, ~1\leq h\leq p^tm-1.$$
Similar to Case 1, suppose that $l'_i, l_h$ ($2\leq i \leq 4, 1\leq h\leq p^tm-1$) are integers satisfying
$$\sum\limits_{i=2}^4l'_i(\lambda_i-\lambda_1)+\sum\limits_{h=1}^{p^tm-1}l_h(\mu_h-\lambda_1)=0,$$
that is,
$$-4p^tml'_2-2p^tm(l'_3+l'_4)+ \sum\limits_{h=1}^{p^tm-1}l_h\left(\omega_{2p^t}^{h} +\omega_{2p^t}^{-h}\right)-\sum\limits_{h=1}^{p^tm-1}l_h(2+2p^tm)=0.$$
Hence, $\omega_{2p^t}$ is a root of the polynomial
\begin{equation}\label{pgfrdicy-equation36}
L_4(x)=-4p^tml'_2-2p^tm(l'_3+l'_4) +\sum\limits_{h=1}^{p^tm-1}l_h\left(x^{h}+x^{-h}\right) -\sum\limits_{h=1}^{p^tm-1}l_h(2+2p^tm).
\end{equation}
Then there exists a polynomial $g_4(x)$ such that
\begin{equation}\label{pgfrdicy-equation37}
L_4(x)=\Phi_{2p^t}(x)g_4(x).
\end{equation}
Let $x=-1$. By (\ref{pgfrdicy-equation36}), we have
\begin{equation*}
\begin{aligned}
L_4(-1)=-4p^tml'_2-2p^tm(l'_3+l'_4) -2p^tm\sum\limits_{h~\text{even}}l_h -(4+2p^tm)\sum\limits_{h~\text{odd}}l_h.
\end{aligned}
\end{equation*}
Recall that $\Phi_{2p^t}(-1)=\Phi_{p^t}(1)=p$. Combining with (\ref{pgfrdicy-equation37}), we have $p\mid L_4(-1)$. Then
$$p\mid -4\sum\limits_{h~\text{odd}}l_h,$$
which implies that
$$\sum\limits_{h~\text{odd}}l_h\neq \pm1.$$
By (\ref{pgfrdicy-equation23}),  $\Cay(T_{4n}, S)$ admits PGFR.
\qed
\end{proof}

\section{Conclusions}
In this paper, we first give a necessary and sufficient description for $\Cay(T_{4n}, S)$ admitting PGFR by analysing the spectral decomposition of the transition matrix of $\Cay(T_{4n}, S)$. By this description, we give some sufficient conditions for $\Cay(T_{4n}, S)$ admitting PGFR when $n$ is a power of a prime number, or $n=p^tm~(t\geq1)$ with $p$ an odd prime number and $m$ a positive integer. Also we give some sufficient conditions for $\Cay(T_{4n}, S)$ not admitting PGFR when $n=pqm$ with $p,q$ distinct odd prime numbers and $m\in \mathbb{N}$, or $n=2^sp$ with $p$ an odd integer and $s\geq1$ an integer. We would like to mention that the method used in this paper can be applied to characterize Cayley graphs over other finite non-abelian groups admitting PGFR.

\end{document}